\newtheorem{theorem}{Theorem}
\newtheorem{lemma}{Lemma}
\newtheorem{proposition}{Proposition}
\newtheorem{definition}{Definition}
\newtheorem{remark}{Remark}
\newcommand{\To}{\rightarrow}
\newcommand{\bsa}{\boldsymbol{a}}
\newcommand{\bsk}{\boldsymbol{k}}
\newcommand{\bsh}{\boldsymbol{h}}
\newcommand{\bsl}{\boldsymbol{l}}
\newcommand{\bsx}{\boldsymbol{x}}
\newcommand{\bsi}{\boldsymbol{i}}
\newcommand{\bsy}{\boldsymbol{y}}
\newcommand{\bsnu}{\boldsymbol{\nu}}
\newcommand{\C}{{\cal C}}
\newcommand{\N}{{\cal N}}
\newcommand{\wal}{{\rm wal}}
\newcommand{\icomp}{\mathtt{i}}
\newcommand{\bszero}{\boldsymbol{0}}
\newcommand{\rd}{\,\mathrm{d}}
\newcommand{\Field}{\mathbb{F}}
\newcommand{\Natural}{\mathbb{N}}
\newcommand{\rdots}{\mathinner{\mkern1mu\lower-1\p@\vbox{\kern7\p@\hbox{.}}
\mkern2mu \raise4\p@\hbox{.}\mkern2mu\raise7\p@\hbox{.}\mkern1mu}}
\begin{document}

\title{\scshape A higher order Blokh-Zyablov propagation rule for higher order nets}

\author{Josef Dick\thanks{J. Dick gratefully acknowledges the support of the Australian Research Council.} 
 and Peter Kritzer\thanks{P. Kritzer is supported by the Austrian Science Fund (FWF), Project P23389-N18.}}

\date{\today}
\maketitle

\begin{abstract}
Higher order nets were introduced by Dick as a generalisation of
classical $(t,m,s)$-nets, which are point sets frequently used in
quasi-Monte Carlo integration algorithms. Essential tools in
finding such point sets of high quality are propagation rules,
which make it possible to generate new higher order nets from
existing higher order nets and even classical $(t,m,s)$-nets. Such propagation rules for higher order nets were first considered by the authors in \cite{DK08} and
further developed in \cite{BDP11}. In \cite{BZ74} Blokh and
Zyablov established a very general propagation rule for linear
codes. This propagation rule has been extended to $(t,m,s)$-nets
by Sch\"urer and Schmid in \cite{mint}. In this paper we show that
this propagation rule can also be extended to higher order nets. Examples indicate that this propagation rule yields new higher order nets with significantly higher quality.
\end{abstract}

\noindent\textbf{Keywords:} Higher order nets, linear
codes, duality theory, propagation rules.\\

\noindent\textbf{2010 Mathematics Subject Classification:} 11K38, 11K45, 11K06, 94B05, 65D30.\\

\section{Introduction}

Quasi-Monte Carlo (QMC) rules are quadrature rules for approximating a high-dimensional integral
$\int_{[0,1]^s} f(\bsx)\rd\bsx$ by the average
of certain function values, $\frac{1}{N}\sum_{i=0}^{N-1}f(\bsx_i)$, where
$\bsx_0,\bsx_1,\ldots,\bsx_{N-1}$ are deterministically chosen
integration nodes. The crucial question is how to choose the
points $\bsx_i$ in order to obtain low integration errors. For
this reason, several different classes of point sets and their
properties in relation to integration problems have been
studied over the past decades (see, e.g., the monographs
\cite{DPCam, niesiam, SJ94} for introductions to this topic). Note that by a point set we always mean a
multiset, i.e., points are allowed to occur repeatedly.
One prominent class of point sets commonly used in QMC algorithms
are $(t,m,s)$-nets as introduced by Niederreiter (cf. \cite{N87,
niesiam, NiedNets}).  It is
known that $(t,m,s)$-nets can yield the optimal order of
convergence (up to powers of the logarithm of the total number of
points) of the integration error in QMC algorithms for functions
of finite variation in the sense of Hardy and Krause (see again
\cite{N87, niesiam, NiedNets}).

In \cite{D07, D08}, Dick generalised Niederreiter's $(t,m,s)$-nets to
so-called higher order nets which have the convenient property
that the corresponding QMC integration algorithms yield higher
order convergence of the error for smoother functions. To be more
precise, in \cite{D08} digital higher order nets were introduced,
the construction of which, as it is also the case for
Niederreiter's digital nets, is based on linear algebra over
finite fields. In what follows, let $\mathbb{F}_q$ be the finite
field of prime-power order $q$. Moreover, we denote by $\Natural$ and $\Natural_0$ 
the set of positive integers and nonnegative integers, respectively.

\begin{definition}\label{defmatricespoints}
Let $q$ be a prime power and let $n,m,s\in\Natural$. Let
$C_1,\ldots,C_s$ be $n\times m$ matrices over the finite field
$\Field_q$ of order $q$. We construct $q^m$ points in $[0,1)^s$ in
the following way: For $0\le h< q^m$ let $h=h_0+h_1 q +\cdots
+h_{m-1}q^{m-1}$ be the base $q$ representation of $h$. Consider
an arbitrary but fixed bijection
$\eta:\{0,1,\ldots,q-1\}\To\Field_q$ where $\eta(0)$ is the zero
element in $\Field_q$. Identify $h$ with the vector
$\bsh:=(\eta(h_0),\ldots,\eta(h_{m-1}))\in\Field_q^m$. For $1\le
j\le s$, we multiply the matrix $C_j$ by $\bsh$,
\[C_j\cdot\bsh^\top =:(y_{j,1}(h),\ldots,y_{j,n}(h))\in\Field_q,\]
and set
\[x_{h}^{(j)}:=\frac{\eta^{-1}(y_{j,1}(h))}{q}+\cdots+\frac{\eta^{-1}(y_{j,n}(h))}{q^n}.\]
Finally, set
$\bsx_{h}:=\left(x_{h}^{(1)},\ldots,x_{h}^{(s)}\right)$. The point
set consisting of the points $\bsx_0,\ldots,\bsx_{q^m-1}$ is
called a digital net over $\Field_q$. The matrices
$C_1,\ldots,C_s$ are called the generating matrices of the digital
net.
\end{definition}

As it can be seen from Definition \ref{defmatricespoints}, the
properties of the points of a digital net (such as, e.g., their
distribution in the unit cube) are determined by properties of the
generating matrices $C_1,\ldots,C_s$. These properties are, in the
currently most general form of digital nets as introduced in
\cite{D08}, described by additional parameters $t,\alpha,\beta$,
which is why those nets are referred to as
$(t,\alpha,\beta,n\times m,s)$-nets. The exact roles of the
parameters $t$, $\alpha$, and $\beta$ are outlined in the following
definition.

\begin{definition}\label{defhonet}
Let $n,m,\alpha\in\Natural$, let $0<\beta\le
\min(1,\alpha m/n)$ be a real number. Let $\Field_q$ be the finite
field of prime power order $q$ and let
$C_1,\ldots,C_s\in\Field_q^{n\times m}$ with
$C_j=\left(\vec{c}_{j,1},\ldots,\vec{c}_{j,n}\right)^{\top}$. The
digital net with generating matrices $C_1,\ldots, C_s$ is called a
digital $(t,\alpha,\beta,n\times m,s)$-net for a natural number
$t$, $0\le t\le \beta n$, if the following condition is satisfied.
For each choice of $1\le i_{j,\nu_j}<\cdots<i_{j,1}\le n$, where
$\nu_j\ge 0$ for $j=1,\ldots,s$, with
\begin{equation}\label{eqcondidef}
 i_{1,1}+\cdots +i_{1,\min\{\nu_1,\alpha\}}+\cdots+i_{s,1}+\cdots+i_{s,\min\{\nu_s,\alpha\}}\le\beta n -t
\end{equation}
the vectors
\[
 \vec{c}_{1,i_{1,\nu_1}},\ldots,\vec{c}_{1,i_{1,1}},\ldots,\vec{c}_{s,i_{s,\nu_s}},\ldots,\vec{c}_{s,i_{s,1}}
\]
are linearly independent over $\Field_q$.

If $t$ is the smallest non-negative integer such that the digital
net generated by $C_1,\ldots, C_s$ is a digital
$(t,\alpha,\beta,n\times m,s)$-net, then we call the digital net a
strict digital $(t,\alpha,\beta, n \times m,s)$-net.
\end{definition}

Note that Definition \ref{defhonet} implies that $t$ must be
chosen such that $\nu_1+\cdots+\nu_s\le m$ holds whenever
\eqref{eqcondidef} is satisfied. (Note that $\nu_j \le i_{j,1}$.) 

The definition of classical digital $(t,m,s)$-nets is obtained by
choosing $\alpha = \beta = 1$ and $m = n$ in
Definition~\ref{defhonet}.

Digital higher order nets are a subclass of general higher order
nets which were introduced in \cite{DickBalGeomprop}.
While we denote digital higher order nets as $(t,\alpha,\beta,n\times m,s)$-nets, by which
we emphasise the role of the generating matrices in the construction, general higher order nets are denoted as $(t,\alpha,\beta,n,m,s)$-nets.

We give the definitions and some properties of
$(t,\alpha,\beta,n,m,s)$-nets in base $b$. We follow \cite{BDP11}
in our presentation.

Let $n,s \ge 1$, $b \ge 2$ be integers. For $\bsnu =
(\nu_1,\dots,\nu_s)\in \left\{ 0,\dots,n \right\}^s$ let $\vert
\bsnu \vert_1 = \sum^s_{j=1} \nu_j$ and define $\bsi_{\bsnu}=
(i_{1,1}, \dots, i_{1,\nu_1}, \dots, i_{s,1}, \dots,i_{s,\nu_s})$
with integers $1 \leq i_{j,\nu_j} < \cdots < i_{j,1} \leq n$ in
case $\nu_j >0$ and $\left\{i_{j,1},\dots,i_{j,\nu_j}
\right\}=\emptyset$ in case $\nu_j=0$, for $j=1,\dots,s$. For
given $\bsnu$ and $\bsi_{\bsnu}$ let $\bsa_{\bsnu} \in \left\{ 0,
\dots, b-1\right\}^{\vert \bsnu \vert_1}$, which we write as
$\bsa_{\bsnu} = (a_{1,i_{1,1}}, \dots, a_{1,i_{1,\nu_1}}, \dots ,
a_{s,i_{s,1}}, \dots, a_{s,i_{s, \nu_s}})$.

A \emph{generalised elementary interval in base} $b$ is a subset
of $[0,1)^s$ of the form
\begin{eqnarray*}
J( \bsi_{\bsnu}, \bsa_{\bsnu} )  = \prod^s_{j=1}
\bigcup^{b-1}_{\stackrel{a_{j,l}=0}{l \in \{1,\dots,n \} \setminus
\{i_{j,1}, \ldots, i_{j,\nu_j}\} } } \left[ \frac{a_{j,1}}{b} +
\dots + \frac{a_{j,n}}{b^n}, \frac{a_{j,1}}{b} + \dots +
\frac{a_{j,n}}{b^n} + \frac{1}{b^n} \right).
\end{eqnarray*}


From \cite[Lemmas 1 and 2]{DickBalGeomprop} it is known that for
$\bsnu \in \left\{ 0,\dots,n \right\}^s$ and $\bsi_{\bsnu}$
defined as above and fixed, the generalised elementary intervals
$J(\bsi_{\bsnu}, \bsa_{\bsnu} )$ where $\bsa_{ \bsnu}$ ranges over
all elements from the set $\left\{ 0,\dots, b-1 \right\}^{ \vert
\bsnu \vert_1 }$
form a partition of $[0,1)^s$ and the volume of $J( \bsi_{\bsnu}, \bsa_{\bsnu} )$ is $b^{- \vert \bsnu \vert_1 }$.\\

We can now give the definition of $(t,\alpha,\beta,n,m,s)$-nets
based on \cite[Definition~4]{DickBalGeomprop}.

\begin{definition}\rm \label{deftalphabetanets} Let $n$, $m$, $s$, $\alpha \geq 1$ be natural numbers, let
$0 < \beta \leq \min(1,\alpha m / n)$ be a real number, and let $0
\leq t \leq \beta n$ be an integer. Let $b \geq 2$ be an integer
and $P = \{\bsx_0,\dots,\bsx_{b^m -1}\}$ be a multiset in
$[0,1)^s$. We say that $P$ is a $(t,\alpha,\beta,n,m,s)$-{\it net
in base} $b$, if for all integers $1 \leq i_{j,\nu_j} < \dots <
i_{j,1}$, where $0 \le \nu_j \leq n$, with
\begin{displaymath} \sum^s_{j=1} \sum^{\min(\nu_j,\alpha)}_{l=1} i_{j,l} \leq \beta n -t, \end{displaymath}
where for $\nu_j=0$ we set the empty sum $\sum^0_{l=1} i_{j,l}
=0$, the generalised elementary interval $J(\bsi_{\bsnu},
\bsa_{\bsnu})$ contains exactly $b^{m-\vert \bsnu \vert_1}$ points
of $P$ for each $\bsa_{\bsnu} \in \left\{ 0,\dots,b-1
\right\}^{\vert \bsnu \vert_1 }$.

A $(t,\alpha,\beta,n,m,s)$-net in base $b$ is called a {\it
strict} $(t,\alpha,\beta,n,m,s)$-{\it net in base} $b$, if it is
not a $(u,\alpha,\beta,n,m,s)$-net in base $b$ with $u<t$.

Informally we refer to $(t,\alpha,\beta, n,m, s)$-nets as higher
order nets.
\end{definition}

Note that in the definition above the specific order of elements
of a multiset is not important. The parameter $t$ is often
referred to as the {\it quality parameter} of the net. By the {\it
strength} of the net one means the quantity $\beta n -t$.

The advantage of the more general concept due to
\cite{DickBalGeomprop} (in comparison to classical $(t,m,s)$-nets)
is that $(t,\alpha,\beta,n,m,s)$-nets in base $b$ can exploit the
smoothness $\alpha$ of a function $f$ (which is not the case for
the classical concepts of $(t,m,s)$-nets and $(t,s)$-sequences).
More precisely, we have the following result from \cite{BDP09} (and \cite{D07, D08} for the digital case).
\begin{proposition}\label{thm_wcebound2}
Let $\{\bsx_0,\ldots, \bsx_{b^m-1}\}$ be a $(t,\alpha,\beta,n, m,
s)$-net in base $b$. Let $f:[0,1]^s \rightarrow \mathbb{R}$ have
mixed partial derivatives up to order $\alpha \ge 2$ in each
variable which are square integrable. Then $$\left| \int_{[0,1]^s}
f(\bsx) \rd \bsx  - \frac{1}{b^m} \sum_{h=0}^{b^m-1}
f(\bsx_h)\right| = {\cal O}\left(b^{- (1-1/\alpha) (\beta n - t)}
(\beta n - t)^{\alpha s}\right).$$ If $\{\bsx_0,\ldots, \bsx_{b^m-1}\}$ is a digital $(t,\alpha,\beta,n, m, s)$-net in base $b$, then $$\left| \int_{[0,1]^s}
f(\bsx) \rd \bsx  - \frac{1}{b^m} \sum_{h=0}^{b^m-1}
f(\bsx_h)\right| = {\cal O}\left(b^{- (\beta n - t)}
(\beta n - t)^{\alpha s}\right).$$
\end{proposition}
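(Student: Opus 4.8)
The plan is to prove the two bounds by the same overall strategy but with different harmonic-analytic machinery: the digital case admits a clean Walsh-series argument exploiting exact character cancellation, whereas the general case must fall back on a $b$-adic (Haar) decomposition that uses only the equidistribution of the net in generalised elementary intervals. In each case I would expand $f$ in a suitable orthonormal system, observe that the integration error reduces to a sum of expansion coefficients indexed by the frequencies the node set fails to integrate exactly, bound those coefficients using the smoothness of $f$, and then show that the net property forces every surviving index to carry a large smoothness weight, so that the geometric decay of the coefficients yields the stated power of $b$ together with the polynomial-in-$(\beta n - t)$ factor.

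For the digital bound I would first write $f = \sum_{\bsk \in \Natural_0^s} \hat f(\bsk)\,\wal_{\bsk}$ in base $b$. Since $\int_{[0,1]^s}\wal_{\bsk}(\bsx)\rd\bsx$ equals $1$ for $\bsk=\bszero$ and $0$ otherwise, while the digital structure gives $\frac{1}{b^m}\sum_{h=0}^{b^m-1}\wal_{\bsk}(\bsx_h)=1$ precisely when $\bsk$ lies in the dual net $\D\subseteq\Natural_0^s$ and $0$ otherwise, the error collapses to $\sum_{\bsk\in\D\setminus\{\bszero\}}\hat f(\bsk)$. I would then invoke the known Walsh-coefficient decay estimate $|\hat f(\bsk)|\le C\,b^{-\mu_\alpha(\bsk)}$, where $\mu_\alpha(\bsk)=\sum_{j=1}^s\mu_\alpha(k_j)$ and $\mu_\alpha(k)$ sums the positions of the largest $\min(v,\alpha)$ nonzero base-$b$ digits of $k$ (with $v$ the number of nonzero digits); this is the step where the square-integrability of the mixed partial derivatives up to order $\alpha$ enters.

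The crux is then to translate the linear-independence condition of Definition~\ref{defhonet} into a minimum-weight statement on the dual net, namely that every nonzero $\bsk\in\D$ satisfies $\mu_\alpha(\bsk)>\beta n-t$; this is exactly the duality between the independence of the column vectors $\vec{c}_{j,i}$ and the absence of low-weight dual elements. Once this is in hand I would split the error sum by the value $r=\mu_\alpha(\bsk)$, estimate for each $r$ the number of admissible indices, multiply by the coefficient bound $b^{-r}$, and sum over $r>\beta n-t$: the tail of the series contributes the geometric decay $b^{-(\beta n - t)}$, while the per-level counting of the capped digit positions produces the polynomial factor $(\beta n - t)^{\alpha s}$, giving the digital estimate.

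For the general (non-digital) case the exact character relations are unavailable, so I would replace the Walsh system by a tensor-product $b$-adic Haar basis. The hypothesis that $P$ is a $(t,\alpha,\beta,n,m,s)$-net guarantees (Definition~\ref{deftalphabetanets}) that every generalised elementary interval $J(\bsi_{\bsnu},\bsa_{\bsnu})$ with $\sum_{j}\sum_{l}i_{j,l}\le\beta n-t$ contains exactly $b^{m-|\bsnu|_1}$ points, which forces all sufficiently coarse Haar functions to be integrated exactly, so that only fine-scale coefficients survive; bounding these by the smoothness and summing is again a counting exercise, but here a Hölder- or Cauchy--Schwarz-type step over the points inside each box is needed to convert point counts into a local error bound, and I expect precisely this step to cost the factor $1/\alpha$ in the exponent, accounting for the weaker decay $b^{-(1-1/\alpha)(\beta n - t)}$. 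I expect the main obstacle in both cases to be the structural translation of the net parameters into the index-set description — the minimum-weight property of $\D$ in the digital case and the exact integration of coarse Haar functions in the general case — rather than the ensuing summation, which is routine once the surviving index set has been pinned down; the supporting coefficient-decay estimates may be taken from \cite{BDP09, D07, D08}.
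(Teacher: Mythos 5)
The paper does not actually prove Proposition~\ref{thm_wcebound2}; it imports it from \cite{BDP09} (general case) and \cite{D07,D08} (digital case). Your sketch for the digital case is in outline the proof from \cite{D08}: Walsh expansion, the character property of the Weyl sums of a digital net, the minimum-$\mu_\alpha$-weight property of the dual space (Proposition~\ref{thm1} in this paper), the decay estimate $|\hat{f}(\bsk)| \le C\, b^{-\mu_\alpha(\bsk)}$, and a count over weight levels. That half is fine.

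The general (non-digital) case contains a genuine gap. First, your stated reason for abandoning the Walsh system (``the exact character relations are unavailable'') is mistaken: the Walsh machinery survives for arbitrary $(t,\alpha,\beta,n,m,s)$-nets. By the Weyl-sum/dual-set characterisation (Theorem~\ref{theorhoalpha} and \eqref{eqequaldualvec} in this paper, taken from \cite{BDP09}), every $\bsk \neq \bszero$ with $\mu_\alpha(\bsk) \le \lfloor \beta n \rfloor - t$ satisfies $\sum_{h} \wal_{\bsk}(\bsx_h) = 0$; note that any such $\bsk$ automatically lies in $\mathcal{K}^s_{b,\lfloor \beta n \rfloor - t}$, since $\mu_\alpha(k_j)$ is at least the position of the most significant digit of $k_j$. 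The proof in \cite{BDP09} therefore keeps the Walsh expansion: the error is at most $\sum |\hat{f}(\bsk)|$ over all $\bsk$ with $\mu_\alpha(\bsk) > \lfloor \beta n \rfloor - t$, where the surviving normalised Weyl sums are bounded trivially by $1$. The exponent $1 - 1/\alpha$ then comes purely from counting: the number of $\bsk$ with $\mu_\alpha(\bsk) = r$ is of order $b^{r/\alpha}$ times a polynomial in $r$, because the digits below the $\alpha$ largest digit positions are unconstrained; summing $b^{r/\alpha} r^{\alpha s} b^{-r}$ over $r > \beta n - t$ gives the claim. It does \emph{not} come from a H\"older or Cauchy--Schwarz step over points in boxes, as you conjecture; and in the digital case the loss disappears because the surviving frequencies form the sparse linear dual space rather than \emph{all} high-weight frequencies.

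Second, and more fundamentally, the Haar route you propose cannot reach the stated rate once $\alpha \ge 3$. Haar functions have a single vanishing moment, so the coefficient of a sup-normalised $b$-adic Haar function at scale $\bsi$ is only $O(b^{-|\bsi|_1})$ no matter how smooth $f$ is; smoothness of order $\alpha \ge 2$ cannot be converted into faster Haar decay. Moreover, a Haar function at scale $\bsi$ is a linear combination of indicators of boxes in which \emph{all} digits up to level $i_j+1$ are prescribed, so Definition~\ref{deftalphabetanets} guarantees its exact integration only when, up to an additive constant, $\alpha |\bsi|_1 \le \beta n - t$ (the capped sum $\sum_{l=1}^{\alpha}(i_j+2-l)$ is about $\alpha i_j$, not $i_j$). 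Summing $b^{-|\bsi|_1}$ over the remaining scales gives an error bound of order $b^{-(\beta n - t)/\alpha}$ up to polynomial factors, and since $1/\alpha < 1 - 1/\alpha$ for $\alpha \ge 3$, this is strictly weaker than the proposition (the two exponents agree only at $\alpha = 2$). The repair is to stay with the Walsh system, whose weight $\mu_\alpha$ --- unlike the Haar scale --- genuinely records the top $\alpha$ digit positions; this is exactly what \cite{D08} and \cite{BDP09} do.
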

Additionally, the following results are known. If $\alpha =  \beta
= 1$ and $n = m$, then the integration error is of order ${\cal
O}(b^{-m+t} m^s)$, see \cite{niesiam}. 

Proposition~\ref{thm_wcebound2} underlines the importance of knowing
explicit constructions of higher order nets with a large value of
$\beta n - t$.

In a series of papers, see for example
\cite{bieredelschmid,NO,niexing98,SS09}, so-called propagation
rules for classical $(t,m,s)$-nets were introduced, which allow
one to construct new digital nets from known ones and thereby
improve on the parameters, in particular on the strength, of those
nets. That such constructions are very useful can be seen in
\cite{mint}, where the best known parameters of classical
$(t,m,s)$-nets are listed. A first step in establishing and using
propagation rules for digital higher order nets (i.e.,
$(t,\alpha,\beta,n\times m,s)$-nets) was made in the paper
\cite{DK08}, where a series of propagation rules for such point sets were discussed. In the later paper \cite{BDP11},
these propagation rules were extended to the case where the
underlying nets need no more be digital, i.e., the rules apply to 
the more general class of $(t,\alpha,\beta,n,m,s)$-nets.

In this paper, it is our aim to show a further propagation rule
for higher order nets, which is a modification of a propagation
rule for linear codes by Blokh and Zyablov \cite{BZ74}. Our main
result, which will be prepared in Section \ref{secbzconstr} and stated in Section \ref{sec-blokh}, is an extension of 
a Blokh-Zyablov propagation rule for classical $(t,m,s)$-nets by Sch\"urer and Schmid in \cite{SS09}. Before
(Section \ref{secdual}), we are going to review duality theory of
higher order nets, as it was introduced in \cite{DK08}  and
\cite{BDP11}. Duality theory will be the main tool in proving our
new propagation rule for higher order nets. Finally, we outline
exemplary numerical results in Section \ref{secnum}, which are
going to highlight the strength of our new propagation rule.

\section{Duality theory}\label{secdual}

In this section we review the basics of duality
theory for higher order nets, as this is going to be the
key ingredient for showing our main results. Duality theory, as
introduced for classical $(t,m,s)$-nets by Niederreiter and Pirsic
\cite{NP}, is a helpful tool in the analysis and construction of
digital nets. In \cite{DK08} it was extended to digital higher
order nets, and it was further extended to the non-digital case in
\cite{BDP11}. We keep close to \cite{BDP11} and \cite{DK08} in the
following two sections.

\subsection{Duality theory for digital higher order nets}

In \cite{DK08}, the dual of the row space of a digital
$(t,\alpha,\beta, n\times m, s)$-net was introduced. Given the
generating matrices $C_1,\ldots,C_s$ of a digital higher order
net, let
\begin{equation*}
C = (C_1^\top \mid \cdots \mid C_s^\top) \in \Field_q^{m \times
sn}.
\end{equation*}
The points of a digital net are obtained from the linear subspace
$\C$ of $\mathbb{F}_q^{ns}$ of dimension at most
$m$, given as the row space of $C$ by Definition~\ref{defmatricespoints}. The dual space of $\C$ is given by
\begin{equation*}
\mathcal{N} = \{\mathbf{x} \in \mathbb{F}_q^{ns}: \mathbf{x} \cdot
\mathbf{y} = 0 \in \mathbb{F}_q \mbox{ for all } \mathbf{y} \in
\mathcal{P}\}, 
\end{equation*}
i.e., $\N$ is the null space of $C$.

Let $\mathbf{A} = (\mathbf{a}_1,\ldots, \mathbf{a}_s) \in
\mathbb{F}_q^{ns}$, where $\mathbf{a}_j = (a_{j,1},\ldots,
a_{j,n}) \in \mathbb{F}_q^n$. For $1 \le j \le s$ with
$\mathbf{a}_j \neq \mathbf{0}$ let $1 \le \nu_j \le n$ denote the
number of nonzero elements of $\mathbf{a}_j$ and let $1 \le
i_{j,\nu_j} < \cdots < i_{j,1} \le n$ denote the indices of the
nonzero elements $a_{j,i_{j,\nu_j}},\ldots, a_{j,i_{j,1}} \in
\mathbb{F}_q^\ast = \mathbb{F}_q\setminus \{0\}$ of $\mathbf{a}_j$
(thus, $a_{j,l} = 0$ for $l \in \{1,\ldots, n\} \setminus
\{i_{j,1},\ldots, i_{j,\nu_j}\}$). Let $\alpha \ge 1$ be an
integer. Then we define
\begin{equation*}
\mu_{\alpha,n}(\mathbf{a}_j) = \left\{\begin{array}{ll} 0 &
\mbox{for } \mathbf{a}_j = \mathbf{0}, \\
\sum_{k=1}^{\min(\alpha,\nu_j)} i_{j,k} & \mbox{otherwise},
\end{array} \right.
\end{equation*}
and
\begin{equation*}
\mu_{\alpha,n}(\mathbf{A}) = \sum_{j=1}^s
\mu_{\alpha,n}(\mathbf{a}_j).
\end{equation*}

Let $\mathcal{N} \subseteq \mathbb{F}_q^{ns}$ and let
$|\mathcal{N}|$ denote the number of elements of $\mathcal{N}$.
For $\mathbf{A}, \mathbf{B} \in \mathcal{N}$ we define the
distance
\begin{equation*}
d_{\alpha,n}(\mathbf{A},\mathbf{B}) = \mu_{\alpha, n}(\mathbf{A} -
\mathbf{B}).
\end{equation*}
Further we define
\begin{equation*}
\delta_{\alpha,n}(\mathcal{N}) = \min_{\substack{\mathbf{A}, \mathbf{B} \in
\mathcal{N}\\ \mathbf{A} \neq \mathbf{B}}} d_{\alpha,
n}(\mathbf{A},\mathbf{B}).
\end{equation*}
We always have $\delta_{\alpha,n}(\N) \ge 1$ and
$\delta_{\alpha,n}(\N) \ge \delta_{\alpha',n}(\N)$ for $\alpha \ge
\alpha' \ge 1$.

The following definition and result was first shown
in \cite{DK08} and is a generalisation of \cite{NP}.

\begin{proposition}\label{thm1}
The matrices $C_1,\ldots, C_s \in \mathbb{F}_q^{n \times m}$
generate a digital $(t,\alpha,\beta,n \times m,s)$-net over
$\mathbb{F}_q$ if and only if
\begin{equation*}
\delta_{\alpha,n}(\N) \ge \beta n - t + 1,
\end{equation*}
where $\mathcal{N}$ is the dual space of the row space
$\mathcal{C}$. If $C_1,\ldots, C_s$ generate a strict digital
$(t_0,\alpha,\beta, n \times m,s)$-net over $\mathbb{F}_q$, where we
assume that $\beta n$ is an integer, then
\begin{equation*}
\delta_{\alpha,n}(\N) = \beta n - t_0 + 1.
\end{equation*}
\end{proposition}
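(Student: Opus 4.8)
The plan is to establish the equivalence by translating the linear-independence condition in Definition~\ref{defhonet} into a statement about the dual space $\N$, using the fundamental duality between the column span of $C$ and its null space. First I would fix notation: a net fails to be a digital $(t,\alpha,\beta,n\times m,s)$-net precisely when there is some admissible index selection $1\le i_{j,\nu_j}<\cdots<i_{j,1}\le n$ satisfying the weight bound \eqref{eqcondidef}, namely $\sum_{j=1}^s\sum_{k=1}^{\min(\nu_j,\alpha)} i_{j,k}\le \beta n - t$, for which the corresponding row vectors $\vec{c}_{j,i_{j,k}}$ are linearly \emph{dependent} over $\Field_q$. The key observation is that such a linear dependence is exactly a nonzero element $\bsA=(\bsa_1,\dots,\bsa_s)\in\N$ whose support, recorded coordinate-by-coordinate, sits on the chosen indices: the nonzero entries of $\bsa_j$ occupy positions $\{i_{j,1},\dots,i_{j,\nu_j}\}$, and the relation $C_j\bsA_j$-type summation vanishing across all $j$ is precisely the statement $\bsA\in\N$.

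The heart of the argument is then a matching between the quantity $\mu_{\alpha,n}(\bsA)$ and the left-hand side of the defining inequality. For a fixed $\bsA\in\N\setminus\{\bszero\}$, each block $\bsa_j$ has its nonzero positions at some $i_{j,\nu_j}<\cdots<i_{j,1}$, and by definition $\mu_{\alpha,n}(\bsa_j)=\sum_{k=1}^{\min(\alpha,\nu_j)} i_{j,k}$, so $\mu_{\alpha,n}(\bsA)=\sum_{j=1}^s\sum_{k=1}^{\min(\nu_j,\alpha)} i_{j,k}$. Thus the minimal weight $\delta_{\alpha,n}(\N)$, which equals $\min_{\bsA\ne\bszero}\mu_{\alpha,n}(\bsA)$ since $d_{\alpha,n}$ is a difference and $\N$ is a subspace, is exactly the smallest possible value of the index-sum over all nonzero dependence relations. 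The net is a $(t,\alpha,\beta,n\times m,s)$-net iff no dependence occurs with index-sum $\le \beta n - t$, i.e. iff every nonzero $\bsA\in\N$ has $\mu_{\alpha,n}(\bsA)\ge \beta n - t + 1$, which is precisely $\delta_{\alpha,n}(\N)\ge\beta n - t + 1$. This proves the equivalence.

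For the strictness claim, I would argue by contrapositive on the extremal $t_0$. If the net is a strict $(t_0,\alpha,\beta,n\times m,s)$-net then it is a $(t_0,\ldots)$-net, giving $\delta_{\alpha,n}(\N)\ge \beta n - t_0 + 1$ by the first part. Conversely, strictness means it is \emph{not} a $(t_0-1,\ldots)$-net, so there exists a nonzero dependence relation $\bsA\in\N$ with $\mu_{\alpha,n}(\bsA)\le \beta n - (t_0-1) = \beta n - t_0 + 1$; combined with the lower bound this forces $\delta_{\alpha,n}(\N)=\beta n - t_0 + 1$. Here is where the assumption that $\beta n$ is an integer is used: it guarantees that $\beta n - t + 1$ takes integer values as $t$ ranges over the integers, so that decreasing $t$ by one shifts the threshold by exactly one and the minimal weight lands on an achievable integer value rather than being trapped between two bounds.

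The main obstacle I anticipate is the careful bookkeeping in the first direction: one must verify that an arbitrary admissible index selection in Definition~\ref{defhonet} (where the $\nu_j$ and the indices are chosen freely subject to the constraint) corresponds bijectively to the support pattern of some candidate $\bsA$, and that the $\min(\nu_j,\alpha)$ truncation in the weight is respected identically on both sides. The subtle point is that $\mu_{\alpha,n}$ only counts the $\alpha$ \emph{largest} indices $i_{j,1},\dots,i_{j,\min(\nu_j,\alpha)}$ of each block, whereas linear dependence involves \emph{all} nonzero coordinates of $\bsA$; one must check that allowing extra nonzero coordinates beyond the $\alpha$ counted ones can only help produce a dependence without lowering $\mu_{\alpha,n}$ below the threshold, so that the two minimisations genuinely agree. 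Once this alignment is pinned down, the rest is the standard subspace-minimum-weight argument.
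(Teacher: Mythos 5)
Your argument is essentially correct and coincides with the approach behind the paper's result: the paper states Proposition~\ref{thm1} without proof (citing \cite{DK08}), and the proof there is precisely this translation between nonzero low-$\mu_{\alpha,n}$-weight vectors of the null space $\N$ and linear dependences among admissibly selected rows of the generating matrices, with strictness pinning down $\delta_{\alpha,n}(\N)$ from above via the non-$(t_0-1)$-net property. The two verifications you flag are the right ones and both go through: a dependence among selected rows may be supported on a proper subset of the selection, but the $\min(\alpha,\nu_j)$-truncated index sum is monotone under shrinking supports, so its weight cannot exceed the selection's index sum; and integrality of $\beta n$ is exactly what makes the threshold $\beta n - t + 1$ move in integer steps so that the upper and lower bounds meet.
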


\subsection{Duality theory for general higher order nets}

Here we review duality theory for higher order nets which also
applies to point sets not obtained by the digital construction
scheme. The basic tool are Walsh functions in integer base $b \ge
2$ whose definition and basic properties are recalled in the
following. We repeat some background from \cite{BDP11}, where also the proofs of the results mentioned in this section can be found.

\begin{definition}\rm
Let $b \geq 2$ be an integer and represent $k \in \mathbb{N}_0$ in
base $b$, $k=\kappa_{a-1} b^{a-1} + \dots + \kappa_0$, with
$\kappa_i \in \left\{ 0,\dots,b-1\right\}$. Further let
$\omega_b={\rm e}^{2 \pi \icomp/b}$ be the $b$th root of unity.
Then the $k${\it th} $b$-{\it adic Walsh function} $\wal_k(x)
:[0,1) \rightarrow \left\{ 1,\omega_b,\dots,\omega^{b-1}_b
\right\}$ is given by
\begin{displaymath}
 \wal_k(x)=\omega_b^{\xi_1 \kappa_0 + \dots + \xi_a \kappa_{a-1}},
\end{displaymath}
for $x \in [0,1)$ with base $b$ representation $x=\xi_1 b^{-1} +
\xi_2 b^{-2} + \cdots $ (unique in the sense that infinitely many
of the $\xi_i$ are different from $b-1$).

For dimension $s \geq 2$, $\bsx=(x_1,\dots,x_s) \in [0,1)^s$, and
$\bsk =(k_1,\dots,k_s) \in \mathbb{N}^s_0$, we define
$\wal_{\bsk}:[0,1)^s \rightarrow
\left\{1,\omega_b,\dots,\omega^{b-1}_b \right\}$ by
\begin{displaymath}
\wal_{\bsk}(\bsx)=\prod^{s}_{j=1} \wal_{k_j}(x_j).
\end{displaymath}
\end{definition}

The following notation will be used throughout the paper: By
$\oplus$ we denote digitwise addition modulo $b$, i.e., for
$x,y \in [0,1)$ with base $b$ expansions $x=\sum^{\infty}_{l=1}
\xi_{l} b^{-l}$ and $y=\sum^{\infty}_{l=1} \eta_{l} b^{-l}$, we
define
\begin{displaymath}
 x \oplus y =\sum^{\infty}_{l=1} \zeta_{l} b^{-l},
\end{displaymath}
where $\zeta_{l} \in \left\{0,\dots,b-1 \right\}$ is given by
$\zeta_{l} \equiv \xi_{l} + \eta_{l} \pmod{b}$. Let $\ominus$
denote digitwise subtraction modulo $b$ (for short we use
$\ominus x:=0 \ominus x$). In the same fashion we also define 
digitwise addition and digitwise subtraction of nonnegative
integers based on the $b$-adic expansion. For vectors in $[0,1)^s$
or $\mathbb{N}^s_0$, the operations $\oplus$ and $\ominus$ are
carried out componentwise. Throughout the paper, we always use the
same base $b$ for the operations $\oplus$ and $\ominus$ as is used
for Walsh functions. Further, we call $x \in [0,1)$ a $b$-adic
rational if it can be written in a finite base $b$ expansion. The
following simple properties of Walsh functions will be used
several times.

For all $k,l \in \mathbb{N}_0$ and all $x,y \in [0,1)$, with the
restriction that if $x,y$ are not $b$-adic rationals, then $x
\oplus y$ is not allowed to be a $b$-adic rational, we have
$\wal_k(x) \cdot \wal_l(x) = \wal_{k \oplus l}(x)$ and $\wal_k(x)
\cdot \wal_k(y)=\wal_k(x \oplus y)$. Furthermore,
$\overline{\wal_k(x)}=\wal_{\ominus k}(x)$.

Now we turn to duality theory for nets. Let
$\mathcal{K}^s_{b,r}=\{0,\ldots,b^r-1\}^s$. We also assume there
is an ordering of the elements in $\mathcal{K}^s_{b,r}$ which can
be arbitrary but needs to be the same in each instance, i.e., let
$\mathcal{K}^s_{b,r} = \{\bsk_0,\ldots, \bsk_{b^{sr}-1}\}$. (Note
that $|\mathcal{K}^s_{b,r}|=b^{s r}$.) By this we mean that in
expressions like $\sum_{\bsk \in \mathcal{K}^s_{b,r}}$,
$(a_{\bsk,\bsl})_{\bsk,\bsl \in \mathcal{K}^s_{b,r}}$, and
$(c_{\bsk})_{\bsk \in \mathcal{K}^s_{b,r}}$ the elements $\bsk$
and $\bsl$ run through the set $\mathcal{K}^s_{b,r}$ always in the
same order.

The following $b^{sr} \times b^{sr}$ matrix plays a central role
in the duality theory for higher order nets,
$${\bf
W}_r:=  \left(\wal_{\bsk}\left(b^{-r}\bsl\right)\right)_{\bsk,\bsl
\in \mathcal{K}^s_{b,r}}.$$ We call ${\bf W}_r$ a \emph{Walsh
matrix}.

In the following we denote by $A^\ast$ the conjugate transpose of
a matrix $A$ over the complex numbers $\mathbb{C}$, i.e., $A^\ast
= \overline{A}^\top$. The following lemma was shown in \cite{BDP11}.

\begin{lemma}\label{le1}
The Walsh matrix ${\bf W}_r$ is invertible and its inverse is
given by ${\bf W}_r^{-1}= b^{-sr} {\bf W}_r^{\ast}$.
\end{lemma}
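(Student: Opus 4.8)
The plan is to verify directly that $\mathbf{W}_r$ satisfies $\mathbf{W}_r \mathbf{W}_r^\ast = b^{sr} I$, where $I$ denotes the $b^{sr}\times b^{sr}$ identity matrix. This single identity yields both assertions of the lemma at once: it shows that $b^{-sr}\mathbf{W}_r^\ast$ is a right inverse of the square matrix $\mathbf{W}_r$, hence a two-sided inverse, so $\mathbf{W}_r$ is invertible with $\mathbf{W}_r^{-1}=b^{-sr}\mathbf{W}_r^\ast$.

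First I would compute the $(\bsk,\bsk')$ entry of the product. By the definition of the conjugate transpose,
\[
(\mathbf{W}_r\mathbf{W}_r^\ast)_{\bsk,\bsk'} = \sum_{\bsl \in \mathcal{K}^s_{b,r}} \wal_{\bsk}(b^{-r}\bsl)\,\overline{\wal_{\bsk'}(b^{-r}\bsl)}.
\]
The crucial observation is that every argument $b^{-r}\bsl$ is a $b$-adic rational, so the elementary Walsh-function identities recalled above apply without any restriction. Using $\overline{\wal_{\bsk'}(x)} = \wal_{\ominus\bsk'}(x)$ together with the product rule $\wal_{\bsk}(x)\,\wal_{\ominus\bsk'}(x) = \wal_{\bsk\ominus\bsk'}(x)$, I would rewrite this entry as $\sum_{\bsl \in \mathcal{K}^s_{b,r}} \wal_{\bsk\ominus\bsk'}(b^{-r}\bsl)$.

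Next I would exploit the product structure of $\wal_{\bsk\ominus\bsk'}$ over the $s$ coordinates. Writing $\boldsymbol{m} = \bsk\ominus\bsk' \in \mathcal{K}^s_{b,r}$ (digitwise subtraction of two numbers in $\{0,\dots,b^r-1\}$ again lands in $\{0,\dots,b^r-1\}$), the sum factors as a product over $j=1,\dots,s$ of one-dimensional character sums $\sum_{l=0}^{b^r-1}\wal_{m_j}(b^{-r}l)$. Expanding $l$ and $m_j$ in base $b$ and inserting the definition of $\wal$, each such one-dimensional sum separates into a product of $r$ geometric sums of the form $\sum_{\lambda=0}^{b-1}\omega_b^{\lambda\mu}$, each of which equals $b$ when the relevant digit $\mu$ of $m_j$ vanishes and equals $0$ otherwise. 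Hence the one-dimensional sum equals $b^r$ if $m_j=0$ and $0$ otherwise; multiplying over $j$ shows that $(\mathbf{W}_r\mathbf{W}_r^\ast)_{\bsk,\bsk'}$ equals $b^{sr}$ when $\boldsymbol{m}=\bszero$, that is $\bsk=\bsk'$, and $0$ otherwise.

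This establishes $\mathbf{W}_r\mathbf{W}_r^\ast = b^{sr} I$ and completes the argument. The only point requiring care—though it is not a genuine obstacle here—is the restriction attached to the Walsh-function product rule; it is harmless precisely because the evaluation points $b^{-r}\bsl$ are all $b$-adic rationals. The remaining work is the routine evaluation of the geometric character sums, which is where the orthogonality of the $b$th roots of unity does all the work.
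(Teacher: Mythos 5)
Your proof is correct, and it is essentially the standard argument: the paper itself gives no inline proof (it cites \cite{BDP11}), where Lemma~\ref{le1} is established by exactly this kind of entrywise computation of $\mathbf{W}_r\mathbf{W}_r^{\ast}$ via orthogonality of the characters $\omega_b^{\lambda\mu}$. Your observation that all evaluation points $b^{-r}\bsl$ are $b$-adic rationals, so the Walsh product rules apply unconditionally, correctly disposes of the only delicate point.
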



Let now $b \ge 2$ and $r,N \ge 1$ be integers. For a multiset  $P=
\{\bsx_0,\ldots ,\bsx_{N -1}\}$ in $[0,1)^s$ and $\bsk \in
\mathcal{K}^s_{b,r}$ we define
$$c_{\bsk}=c_{\bsk}(P):=\sum_{h=0}^{N -1} \wal_{\bsk}(\bsx_h)$$ (note that
$|c_{\bsk}|\le N$ and $c_{\bszero} = N$) and the vector
\begin{equation} \label{eqdefvectorC} \vec{C}=\vec{C}(P):=(c_{\bsk})_{\bsk \in \mathcal{K}^s_{b,r}}.\end{equation}

For $\bsa=(a_1,\ldots,a_s) \in \mathcal{K}^s_{b,r}$ define the
elementary $b$-adic interval $$E_{\bsa}:=\prod_{j=1}^s
\left[\frac{a_{j}}{b^r},\frac{a_{j} +1}{b^r}\right).$$

We also have the following lemma.
\begin{lemma}\label{le2}
We have $$\sum_{\bsk \in \mathcal{K}^s_{b,r}} \wal_{\bsk}(\bsx
\ominus \bsy)=\left\{
\begin{array}{ll}
|\mathcal{K}^s_{b,r}| & \mbox{ if } \bsx,\bsy \in E_{\bsa} \mbox{ for some } \bsa \in \mathcal{K}^s_{b,r},\\
0 & \mbox{ otherwise}.
\end{array}\right.$$
\end{lemma}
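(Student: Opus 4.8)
The plan is to reduce the identity to a one-dimensional character sum and evaluate it digit by digit. Since $\wal_{\bsk}(\bsz) = \prod_{j=1}^s \wal_{k_j}(z_j)$ and the index $\bsk = (k_1,\ldots,k_s)$ runs independently over $\{0,\ldots,b^r-1\}$ in each coordinate (and $\ominus$ acts componentwise), the sum factorises as
\begin{equation*}
\sum_{\bsk \in \mathcal{K}^s_{b,r}} \wal_{\bsk}(\bsx \ominus \bsy) = \prod_{j=1}^s \left( \sum_{k=0}^{b^r-1} \wal_k(x_j \ominus y_j) \right).
\end{equation*}
It therefore suffices to evaluate the one-dimensional sum and to multiply the outcomes over the $s$ coordinates.

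Fix a coordinate and write $z = x_j \ominus y_j = \sum_{l \ge 1} \zeta_l b^{-l}$ for the digitwise difference. For $k = \kappa_0 + \kappa_1 b + \cdots + \kappa_{r-1} b^{r-1}$ the definition of $\wal_k$ gives $\wal_k(z) = \omega_b^{\zeta_1 \kappa_0 + \cdots + \zeta_r \kappa_{r-1}}$, which depends only on the first $r$ digits $\zeta_1,\ldots,\zeta_r$ of $z$. Summing over the digits $\kappa_0,\ldots,\kappa_{r-1}$ independently factorises the one-dimensional sum into
\begin{equation*}
\sum_{k=0}^{b^r-1} \wal_k(z) = \prod_{l=1}^r \left( \sum_{\kappa=0}^{b-1} \omega_b^{\zeta_l \kappa} \right).
\end{equation*}

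Each inner factor is a geometric sum over the $b$th roots of unity: it equals $b$ if $\zeta_l = 0$ and vanishes if $\zeta_l \in \{1,\ldots,b-1\}$, since $\omega_b^{\zeta_l}$ is then a nontrivial $b$th root of unity satisfying $(\omega_b^{\zeta_l})^b = 1$. Hence the one-dimensional sum equals $b^r$ when $\zeta_1 = \cdots = \zeta_r = 0$ and is $0$ otherwise. The condition $\zeta_1 = \cdots = \zeta_r = 0$ says exactly that the first $r$ base-$b$ digits of $x_j$ and $y_j$ coincide, i.e. that $x_j$ and $y_j$ lie in a common interval $[a_j/b^r,(a_j+1)/b^r)$. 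Taking the product over all coordinates, the full sum equals $\prod_{j=1}^s b^r = b^{sr} = |\mathcal{K}^s_{b,r}|$ precisely when $\bsx,\bsy \in E_{\bsa}$ for some $\bsa \in \mathcal{K}^s_{b,r}$, and is $0$ as soon as some coordinate pair fails this, which gives the claim.

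The genuine computation is the character-sum evaluation, and it is routine once the sum has been factorised. The point that needs the most care is the interplay with the $b$-adic expansion conventions: I must phrase the criterion in terms of the vanishing of the first $r$ digits of the digitwise difference (equivalently, agreement of the first $r$ digits of $x_j$ and $y_j$), rather than as a real-number condition $x_j \ominus y_j \in [0,b^{-r})$, since only the first $r$ digits enter both the sum and the interval membership, and this avoids any ambiguity coming from non-canonical tails of the digitwise difference.
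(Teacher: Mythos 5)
Your proof is correct. The paper itself does not prove Lemma~\ref{le2} --- it is quoted from \cite{BDP11} together with the other background results of Section~\ref{secdual} --- so there is no line-by-line comparison to make; but your argument is exactly the standard one underlying this statement. The digit-by-digit character-sum evaluation $\sum_{\kappa=0}^{b-1}\omega_b^{\zeta\kappa}=b$ if $\zeta=0$ and $0$ otherwise is the same computation that proves Lemma~\ref{le1}; an equivalent, slightly shorter packaging available inside the paper would be to note that each $\wal_k$ with $k<b^r$ is constant on the intervals $[ab^{-r},(a+1)b^{-r})$, so that for $\bsx\in E_{\bsa}$ and $\bsy\in E_{\bsc}$ the sum equals $\sum_{\bsk}\wal_{\bsk}\left(b^{-r}\bsa\right)\overline{\wal_{\bsk}\left(b^{-r}\bsc\right)}$, i.e.\ an inner product of two columns of $\mathbf{W}_r$, which is $b^{sr}\,$ or $\,0$ by Lemma~\ref{le1}. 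One point deserves emphasis because you flagged it: your insistence on the digitwise reading of $\bsx\ominus\bsy$ (agreement of the first $r$ digits of $x_j$ and $y_j$) rather than the real-number reading is not pedantry but necessary --- if the digit string of the difference has first $r$ digits zero but ends in all digits $b-1$, the real number it represents is $b^{-r}$ and the real-number interpretation would give $0$ instead of $b^{sr}$, making the claim false; this is the same subtlety behind the paper's restriction on when $\wal_k(x)\wal_k(y)=\wal_k(x\oplus y)$ holds, and you resolved it correctly.
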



Let $\bsx \in E_{\bsa}$ for some $\bsa \in \mathcal{K}^s_{b,r}$.
Then, using Lemma~\ref{le2}, we have
\begin{eqnarray*}
\frac{1}{|\mathcal{K}^s_{b,r}|}\sum_{\bsk \in \mathcal{K}^s_{b,r}}
c_{\bsk} \overline{\wal_{\bsk}(\bsx)} & =
& \frac{1}{|\mathcal{K}^s_{b,r}|}\sum_{\bsk \in \mathcal{K}^s_{b,r}} \sum_{h=0}^{N -1} \wal_{\bsk}(\bsx_h \ominus \bsx)\\
& = & \sum_{h=0}^{N -1} \frac{1}{|\mathcal{K}^s_{b,r}|}\sum_{\bsk \in \mathcal{K}^s_{b,r}}\wal_{\bsk}(\bsx_h \ominus \bsx)\\
& = & |\{h \, : \, \bsx_h \in E_{\bsa}\}|=:m_{\bsa}.
\end{eqnarray*}

\begin{definition}\rm\label{def_dual}
Let $b \ge 2$ and $r,N \ge 1$ be integers. Let $P =
\{\bsx_0,\ldots, \bsx_{N-1}\}$ be a multiset in $[0,1)^s$  and let
$\mathcal{K}^s_{b,r} = \{0,\ldots, b^r-1\}^s$.
\begin{enumerate}
\item For $\bsa \in \mathcal{K}^s_{b,r}$ let
$$m_{\bsa}=m_{\bsa}(P): = |\{h \, : \, \bsx_h \in E_{\bsa}\}|$$
and
$$ \vec{M}=\vec{M}(P): = \left(m_{\bsa}\right)_{\bsa \in \mathcal{K}^s_{b,r}}. $$
Then we call the vector $\vec{M}$ the \emph{point set vector}
(with resolution $r$). \item The vector $\vec{C}=\vec{C}(P)$ from
\eqref{eqdefvectorC} is called the \emph{dual vector} (with
respect to the Walsh matrix $\mathbf{W}_r$). \item The set
$$\mathcal{D}_r =\mathcal{D}_r(P):= \{\bsk \in
\mathcal{K}^s_{b,r}: c_{\bsk} \neq 0\}$$ is called the \emph{dual
set} (with respect to the Walsh matrix $\mathbf{W}_r$).
\end{enumerate}
\end{definition}

The relationship between a point set vector and its dual vector is
stated in the following theorem.

\begin{theorem}\label{thm_dual}
Let $P = \{\bsx_0,\ldots, \bsx_{N-1}\}$ be a multiset in $[0,1)^s$
and let $r \in \mathbb{N}$. Let $\vec{M}$ be the point set vector
with resolution $r$ and $\vec{C}$ be the dual vector with respect
to $\mathbf{W}_r$ defined as above. Then
\begin{equation}
\frac{1}{|\mathcal{K}^s_{b,r}|} {\bf W}_r \vec{C}=\vec{M}
\;\;\mbox{ and }\;\; \vec{C} = {\bf W}_r^{\ast} \vec{M}.
\end{equation}
\end{theorem}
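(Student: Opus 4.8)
The plan is to prove the two identities in tandem: once either one is established, the other is immediate from Lemma~\ref{le1}. Indeed, $\mathbf{W}_r^{-1} = b^{-sr}\mathbf{W}_r^\ast$ gives $\mathbf{W}_r \mathbf{W}_r^\ast = \mathbf{W}_r^\ast \mathbf{W}_r = b^{sr} I = |\mathcal{K}^s_{b,r}|\, I$, so multiplying $\vec{C} = \mathbf{W}_r^\ast \vec{M}$ on the left by $\mathbf{W}_r$ and dividing by $|\mathcal{K}^s_{b,r}|$ yields $\frac{1}{|\mathcal{K}^s_{b,r}|}\mathbf{W}_r\vec{C} = \vec{M}$, and conversely. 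Thus it suffices to establish one of them directly from the definitions, and I would target the coordinatewise relation between the $c_{\bsk}$ and the counts $m_{\bsa}$.

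The substantive identity is essentially already contained in the computation displayed just before Definition~\ref{def_dual}; the idea I would use is to evaluate that identity at the representative points of the elementary intervals. For each $\bsl \in \mathcal{K}^s_{b,r}$ the point $\bsx = b^{-r}\bsl$ lies in the cell $E_{\bsl}$, so specialising $\frac{1}{|\mathcal{K}^s_{b,r}|}\sum_{\bsk \in \mathcal{K}^s_{b,r}} c_{\bsk}\overline{\wal_{\bsk}(\bsx)} = m_{\bsa}$ (valid for $\bsx \in E_{\bsa}$) to $\bsx = b^{-r}\bsl$ gives $\frac{1}{|\mathcal{K}^s_{b,r}|}\sum_{\bsk} c_{\bsk}\overline{\wal_{\bsk}(b^{-r}\bsl)} = m_{\bsl}$ for every $\bsl$. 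Recognising $\overline{\wal_{\bsk}(b^{-r}\bsl)}$ as the $(\bsl,\bsk)$ entry of $\mathbf{W}_r^\ast$ (the $(\bsk,\bsl)$ entry of $\mathbf{W}_r$ being $\wal_{\bsk}(b^{-r}\bsl)$) turns this system of $|\mathcal{K}^s_{b,r}|$ scalar equations into a single matrix identity relating $\vec{M}$ and $\vec{C}$, and the companion identity then follows from Lemma~\ref{le1} as above. The only input needed here beyond the displayed computation is Lemma~\ref{le2}, which supplies the orthogonality $\sum_{\bsk}\wal_{\bsk}(\bsx_h \ominus \bsx) = |\mathcal{K}^s_{b,r}|$ or $0$ according to whether $\bsx_h$ and $\bsx$ share a cell; this is precisely what collapses the inner sum to the point count $m_{\bsl}$.

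An equivalent, and perhaps more transparent, route avoids re-deriving the displayed computation and instead exploits that, for every $\bsk \in \mathcal{K}^s_{b,r}$, the Walsh function $\wal_{\bsk}$ is constant on each elementary interval $E_{\bsa}$: since each $k_j \le b^r-1$ has at most $r$ nonzero base-$b$ digits, $\wal_{k_j}(x_j)$ depends only on the first $r$ digits of $x_j$, which are common to all $x_j$ in the same one-dimensional cell. Hence $\wal_{\bsk}(\bsx_h) = \wal_{\bsk}(b^{-r}\bsa)$ whenever $\bsx_h \in E_{\bsa}$, and grouping the $N$ points according to the cell they occupy gives $c_{\bsk} = \sum_{h} \wal_{\bsk}(\bsx_h) = \sum_{\bsa \in \mathcal{K}^s_{b,r}} m_{\bsa}\,\wal_{\bsk}(b^{-r}\bsa)$, which is precisely a row of a Walsh matrix applied to $\vec{M}$; Lemma~\ref{le1} again produces the other identity.

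I expect the main obstacle to be bookkeeping rather than conceptual: one must keep the fixed ordering of $\mathcal{K}^s_{b,r}$ consistent throughout, use the symmetry $\wal_{\bsk}(b^{-r}\bsl) = \wal_{\bsl}(b^{-r}\bsk)$ of the Walsh matrix, and track the complex conjugation carefully so that the factor $\mathbf{W}_r$ versus $\mathbf{W}_r^\ast$ lands on the correct side of each of the two stated equations. Getting these transpose-and-conjugate placements right, together with verifying the cell-constancy claim for $\wal_{\bsk}$, is the only delicate part; everything else is a direct consequence of Lemmas~\ref{le1} and~\ref{le2}.
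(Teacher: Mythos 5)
Both of your routes are mathematically sound, and they coincide with the paper's own (implicit) argument: the paper proves this theorem via the displayed computation preceding Definition~\ref{def_dual} together with Lemmas~\ref{le1} and~\ref{le2}, which is your first route, and your cell-constancy argument is a correct shortcut to the same end. The genuine problem is the one step you explicitly defer as ``bookkeeping'': it does not come out the way you claim. Carry it out. Your first route, specialised at $\bsx = b^{-r}\bsl$, gives $m_{\bsl} = \frac{1}{|\mathcal{K}^s_{b,r}|}\sum_{\bsk}\overline{\wal_{\bsk}(b^{-r}\bsl)}\,c_{\bsk}$, i.e.\ $\vec{M} = \frac{1}{|\mathcal{K}^s_{b,r}|}\mathbf{W}_r^{\ast}\vec{C}$; your second route gives $c_{\bsk} = \sum_{\bsa} m_{\bsa}\,\wal_{\bsk}(b^{-r}\bsa)$, i.e.\ $\vec{C} = \mathbf{W}_r\vec{M}$. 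In both cases the conjugate transpose lands on the \emph{opposite} equation from the one printed in the theorem, and the two placements are not interchangeable: since $\mathbf{W}_r$ is symmetric and $\vec{M}$ is real, one has $\mathbf{W}_r^{\ast}\vec{M} = \overline{\mathbf{W}_r\vec{M}} = \overline{\vec{C}} = (c_{\ominus\bsk})_{\bsk}$, so the printed identities are equivalent to your derived ones only when $\vec{C}$ is real, which fails for general point sets.

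A concrete check: take $b=3$, $s=r=1$, $N=1$ and the single point $x_0 = 1/3$. Then $\vec{C} = (1,\omega_3,\omega_3^2)^\top$, $\vec{M}=(0,1,0)^\top$, and
\begin{equation*}
\mathbf{W}_1\vec{M} = \vec{C}, \qquad \tfrac{1}{3}\mathbf{W}_1^{\ast}\vec{C} = \vec{M}, \qquad \text{but} \qquad \tfrac{1}{3}\mathbf{W}_1\vec{C} = (0,0,1)^\top \neq \vec{M}, \quad \mathbf{W}_1^{\ast}\vec{M} = (1,\omega_3^2,\omega_3)^\top \neq \vec{C}.
\end{equation*}
So no amount of careful tracking will place $\mathbf{W}_r$ and $\mathbf{W}_r^{\ast}$ where the statement has them: the statement as printed is a misprint (it holds with $\vec{C}$ replaced by $\overline{\vec{C}}$, equivalently with $\mathbf{W}_r$ and $\mathbf{W}_r^{\ast}$ interchanged), and indeed it is inconsistent with the paper's own displayed computation before Definition~\ref{def_dual}, which agrees with your derivation. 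Your proposal therefore proves the corrected identities, but as a proof of the literal statement it fails at exactly the step you labelled delicate-but-routine; the honest completion is to prove $\vec{C} = \mathbf{W}_r\vec{M}$ and $\vec{M} = \frac{1}{|\mathcal{K}^s_{b,r}|}\mathbf{W}_r^{\ast}\vec{C}$ and to flag that the theorem's stars should be swapped, rather than to assert that the bookkeeping will reproduce the printed form.
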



The vector $\vec{C}$ carries sufficient information to construct a
point set in the following way: Given $\vec{C}$, we can use
Theorem~\ref{thm_dual} to determine how many points are to be
placed in the interval $E_{\bsa}$, $\bsa \in \mathcal{K}^s_{b,r}$.
We remark that at the functional level, the vector $\vec{C}$ is
comparable to the generator matrices of a digital net, which
completely determine the point set.

Note that for the $(t,\alpha,\beta,n,m,s)$-net property it is of
no importance where exactly within an interval $E_{\bsa}$, $\bsa
\in \mathcal{K}^s_{b,n}$, the points are placed. Hence we can
reconstruct a net from a dual vector with respect to
$\mathbf{W}_r$ provided that $r \ge \lfloor \beta n \rfloor - t$.
In words, if one knows the dual vector of a net, then one can use
this dual vector to obtain the net via Theorem~\ref{thm_dual}
provided that the resolution is greater than or equal to the
strength of the net.

In analogy, the dual space of a digital net also allows us to
reconstruct the original point set, see \cite{NP}. Although
$\vec{C}$ is different from the dual space for digital nets, it
contains the same information and can be used in a manner similar
to the dual space. In case $P$ is a digital $(t,\alpha,\beta,n
\times m,s)$-net, the dual set $\mathcal{D}_n$ defined in
Definition~\ref{def_dual} coincides with the dual space defined in
\cite{DK08} intersected with $\mathcal{K}^s_{b,n}$, and if $P$ is
a digital $(t,m,s)$-net, it coincides with the dual space in
\cite{NP} intersected with $\mathcal{K}^s_{b,m}$.

Although the above results hold for arbitrary point sets, in the
following we consider point sets which are nets and show how to
relate the quality of a $(t,\alpha,\beta,n,m,s)$-net to its dual
set. To this end we need to introduce a function which was first
introduced in \cite{D08} in the context of applying digital nets
to quasi-Monte Carlo integration of smooth functions and which is
related to the quality of suitable digital nets. For $k \in
\mathbb{N}_0$ and $\alpha \ge 1$ let
\begin{equation*}
\mu_\alpha(k) = \left\{\begin{array}{ll}  a_1 + \cdots +
a_{\min(\nu,\alpha)} & \mbox{for } k > 0, \\ 0 & \mbox{for } k =
0,
\end{array} \right.
\end{equation*}
where for $k > 0$ we assume that $k = \kappa_1 b^{a_1-1} + \cdots
+ \kappa_{\nu} b^{a_\nu-1}$ with $0 < \kappa_1,\dots, \kappa_\nu <
b$ and $1 \le a_\nu < \cdots < a_1$. Note that for $\alpha=1$ we
obtain the well-known Niederreiter-Rosenbloom-Tsfasman (NRT)
weight (see, for example, \cite[Section~7.1]{DPCam}).

For a vector $\boldsymbol{k} = (k_1,\dots, k_s) \in
\mathbb{N}_0^s$ we define $\mu_\alpha(\boldsymbol{k}) =
\mu_\alpha(k_1) + \cdots + \mu_\alpha(k_s)$ and for a subset
$\mathcal{Q}$ of $\mathcal{K}_{b,r}^s$ with $\mathcal{Q} \setminus
\{\bszero\} \neq \emptyset$ and $\alpha \ge 1$ define
$$\rho_\alpha(\mathcal{Q}):=\min_{\bsk \in \mathcal{Q} \setminus
\{\bszero\}} \mu_\alpha(\bsk).$$  For $\mathcal{Q} \subseteq
\{\bszero\}$ we set $\rho_\alpha(\mathcal{Q}) = r+1$.

Let $P= \{\bsx_0,\ldots, \bsx_{N-1}\} \subset [0,1)^s$. In the
following we consider for which cases we have $\mathcal{D}_r(P) =
\{\bszero\}$ (note that $\bszero \in \mathcal{D}_r(P)$ for any
point set $P$ with at least one point). If $\mathcal{D}_r(P) =
\{\bszero\}$, then we have $c_{\bszero} \neq 0$ and $c_{\bsk} = 0$
for all $\bsk \in \mathcal{K}_{b,r}^s \setminus\{\bszero\}$. By
Theorem~\ref{thm_dual} we have $\vec{M}(P) = c_{\bszero} b^{-rs}
(1, 1, \ldots, 1)^\top$, that is, each box $E_{\bsa}$ contains
exactly $c_{\bszero} b^{-rs}$ points for all $\bsa \in
\mathcal{K}_{b,r}^s$ and $P$ consists of $N = c_{\bszero}$ points
altogether. This is the only case for which $\mathcal{D}_r(P) =
\{\bszero\}$.

Conversely, since the number of points in $E_{\bsa}$ must be an
integer, it follows that $c_{\bszero} b^{-rs} \in \mathbb{N}$,
i.e., $b^{rs}$ divides $c_{\bszero}$ and therefore $b^{rs}$
divides $N$. From this we conclude that if we choose a resolution
$r \in \mathbb{N}$ such that $b^{rs} > N$, i.e., $r > \frac{1}{s}
\log_b N$, then $\mathcal{D}_r(P) \neq \{\bszero\}$. For a net
with $N = b^m$ points this means that we require $r > m/s$.

The following theorem establishes a relationship between
$\rho_{\alpha}(\mathcal{Q})$ and the quality of a
$(t,\alpha,\beta,n,m,s)$-net.

\begin{theorem} \label{theorhoalpha}
Let $P= \{\bsx_0,\ldots,\bsx_{b^m -1}\} \subset [0,1)^s$ be a
multiset. Then $P$ is a $(t,\alpha,\beta,n,m,s)$-net in base $b$
if and only if $\rho_\alpha(\mathcal{D}_{\lfloor \beta n \rfloor -
t})\ge \lfloor \beta n \rfloor -t+1$. If $P$ is a strict
$(t_0,\alpha,\beta, n, m, s)$-net in base $b$, then
$\rho_\alpha(\mathcal{D}_{\lfloor \beta n \rfloor - t_0}) =
\lfloor \beta n \rfloor - t_0 + 1$.
\end{theorem}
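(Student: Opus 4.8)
The plan is to translate the combinatorial net condition into a statement about the Walsh coefficients $c_{\bsk}$ and then match that against $\rho_\alpha$. The engine is an expansion of the indicator of a generalised elementary interval into Walsh functions. Fixing $\bsi_{\bsnu}$ and $\bsa_{\bsnu}$, the indicator of $J(\bsi_{\bsnu},\bsa_{\bsnu})$ factorises over the coordinates, and in coordinate $j$ the constraint ``the digits at the positions $i_{j,1},\dots,i_{j,\nu_j}$ take the prescribed values'' is encoded via $\mathbf 1[\xi_i=a]=b^{-1}\sum_{\kappa=0}^{b-1}\omega_b^{-\kappa a}\wal_{\kappa b^{i-1}}(x)$, using $\wal_{\kappa b^{i-1}}(x)=\omega_b^{\kappa\xi_i}$. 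Multiplying out over $l$ and $j$ (the constrained positions are distinct, so the Walsh indices combine without carries), summing over the points of $P$, and using $c_{\bsk}=\sum_h\wal_{\bsk}(\bsx_h)$, I obtain
\begin{equation*}
\#\{h:\bsx_h\in J(\bsi_{\bsnu},\bsa_{\bsnu})\}=\frac{1}{b^{|\bsnu|_1}}\sum_{\bsk\in S(\bsi_{\bsnu})}\omega_b^{-\langle\bsk,\bsa_{\bsnu}\rangle}\,c_{\bsk},
\end{equation*}
where $S(\bsi_{\bsnu})$ is the set of all $\bsk$ whose nonzero digits in each coordinate $j$ occur only in the positions $i_{j,1},\dots,i_{j,\nu_j}$, and $\langle\bsk,\bsa_{\bsnu}\rangle$ is the associated digit pairing. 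The term $\bsk=\bszero$ contributes $c_{\bszero}/b^{|\bsnu|_1}=b^{m-|\bsnu|_1}$, the fair count. Hence $J(\bsi_{\bsnu},\bsa_{\bsnu})$ is fair for \emph{every} $\bsa_{\bsnu}$ iff the sum $\sum_{\bsk\in S(\bsi_{\bsnu})\setminus\{\bszero\}}\omega_b^{-\langle\bsk,\bsa_{\bsnu}\rangle}c_{\bsk}$ vanishes identically in $\bsa_{\bsnu}$; by orthogonality of the additive characters of $\{0,\dots,b-1\}^{|\bsnu|_1}$ (equivalently, the invertibility in Lemma~\ref{le1}) this holds iff $c_{\bsk}=0$ for all $\bsk\in S(\bsi_{\bsnu})\setminus\{\bszero\}$.

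Next I would connect $S(\bsi_{\bsnu})$ to $\mu_\alpha$. A nonzero $\bsk$ lies in $S(\bsi_{\bsnu})$ for \emph{some} admissible $\bsi_{\bsnu}$ with $\sum_{j}\sum_{l=1}^{\min(\nu_j,\alpha)}i_{j,l}\le\beta n-t$ iff $\mu_\alpha(\bsk)\le\beta n-t$: taking the index set in coordinate $j$ to be exactly the support of the digits of $k_j$ realises the index sum $\mu_\alpha(\bsk)$, and no admissible choice lowers it, so this is optimal. Since $\mu_\alpha(\bsk)$ is an integer, $\mu_\alpha(\bsk)\le\beta n-t$ is the same as $\mu_\alpha(\bsk)\le\lfloor\beta n\rfloor-t$; and because $\mu_\alpha(k_j)$ dominates the top digit position of $k_j$, any such $\bsk$ automatically lies in $\mathcal{K}^s_{b,\lfloor\beta n\rfloor-t}$. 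Combining with the first paragraph, $P$ is a $(t,\alpha,\beta,n,m,s)$-net precisely when $c_{\bsk}=0$ for every $\bsk\neq\bszero$ with $\mu_\alpha(\bsk)\le\lfloor\beta n\rfloor-t$, i.e. every nonzero $\bsk$ with $c_{\bsk}\neq0$ has $\mu_\alpha(\bsk)\ge\lfloor\beta n\rfloor-t+1$. Writing $r=\lfloor\beta n\rfloor-t$, this is exactly $\rho_\alpha(\mathcal{D}_{r})\ge r+1$ (the resolution is harmless by the automatic membership just noted, and the case $\mathcal{D}_r=\{\bszero\}$ is covered by the convention $\rho_\alpha(\mathcal{D}_r)=r+1$), which is the first assertion.

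For the strict statement I would apply the equivalence at $t_0$ and at $t_0-1$. Being a $(t_0,\ldots)$-net gives $\rho_\alpha(\mathcal{D}_{r_0})\ge r_0+1$ with $r_0=\lfloor\beta n\rfloor-t_0$, while \emph{failing} to be a $(t_0-1,\ldots)$-net yields an admissible $\bsi_{\bsnu}$ at level $t_0-1$ and some $\bsa_{\bsnu}$ for which $J$ is not fair, hence a nonzero $\bsk$ with $c_{\bsk}\neq0$ and $\mu_\alpha(\bsk)\le\lfloor\beta n\rfloor-t_0+1=r_0+1$. The net property at $t_0$ forces $\mu_\alpha(\bsk)\ge r_0+1$, so $\mu_\alpha(\bsk)=r_0+1$, which should supply the matching bound $\rho_\alpha(\mathcal{D}_{r_0})\le r_0+1$ and hence equality.

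The step I expect to be the main obstacle is exactly this last matching. The equality $\mu_\alpha(\bsk)=r_0+1$ only forces the top digit position of $\bsk$ to be at most $r_0+1$, so a priori $\bsk$ need not lie in $\mathcal{K}^s_{b,r_0}$: it could be a single-coordinate frequency $\kappa b^{r_0}\bse_j$ that lives only in $\mathcal{D}_{r_0+1}$. In that degenerate situation the witness does not directly certify $\rho_\alpha(\mathcal{D}_{r_0})\le r_0+1$, and one must argue separately that then $\mathcal{D}_{r_0}=\{\bszero\}$, so that the convention supplies the value $r_0+1$, or else invoke the integrality of $\beta n$ as in the digital analogue in Proposition~\ref{thm1}. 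Handling this resolution truncation cleanly — rather than the character expansion, which is routine — is where the real work of the strict half lies.
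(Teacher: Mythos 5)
Your treatment of the equivalence half is correct, and it is in substance the standard duality argument: note that this paper contains no proof of Theorem~\ref{theorhoalpha} at all (the results of Section~\ref{secdual} are quoted from \cite{BDP11}), so the comparison can only be against the approach that framework suggests, which is exactly your character expansion of indicators of generalised elementary intervals plus orthogonality. Your reduction to $\mu_\alpha$ (exact-support choice of $\bsi_{\bsnu}$, monotonicity of the truncated index sum, integrality of $\mu_\alpha$, and the automatic containment of low-weight frequencies in $\mathcal{K}^s_{b,\lfloor\beta n\rfloor-t}$) is complete and correct.

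The obstacle you flag in the strict half is not only genuine, it is fatal: the strict statement is false as written, so neither of your proposed repairs can close the gap. Take $b=2$, $s=1$, $\alpha=2$, $\beta=1$, $n=m=4$, and let $P$ be the multiset of $16$ points consisting of two copies each of $0,\,6/16,\,8/16,\,14/16$ and one copy each of $1/16,\,2/16,\,4/16,\,7/16,\,9/16,\,10/16,\,12/16,\,15/16$; equivalently, the point $a/16$ has multiplicity $1+\tfrac12\wal_6(a/16)+\tfrac12\wal_8(a/16)$. Orthogonality of Walsh characters gives $c_0=16$, $c_6=c_8=8$, and $c_k=0$ for all other $k<16$. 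The only nonzero $k$ with $\mu_2(k)\le 3$ are $k\in\{1,2,3,4\}$, and these have $c_k=0$, so $P$ is a $(1,2,1,4,4,1)$-net (one can also check all four admissible digit partitions directly); but $P$ is not a $(0,2,1,4,4,1)$-net, since the two boxes determined by the fourth digit contain $12$ and $4$ points (equivalently $c_8\neq 0$, $\mu_2(8)=4$). Hence $P$ is strict with $t_0=1$ and $r_0=\lfloor\beta n\rfloor-t_0=3$, yet $\mathcal{D}_3=\{0,6\}$ and $\rho_2(\mathcal{D}_3)=\mu_2(6)=5\neq 4=r_0+1$. Here $\beta n=4$ is an integer and $\mathcal{D}_{r_0}\neq\{\bszero\}$, so both escape routes you suggested (the convention for $\mathcal{Q}\subseteq\{\bszero\}$, or integrality of $\beta n$ as in Proposition~\ref{thm1}) are unavailable: this is precisely your ``degenerate situation'' with witness $k=\kappa b^{r_0}=8$, realised by an actual strict net.

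What your witness argument does prove is the statement one resolution higher: for a strict $(t_0,\alpha,\beta,n,m,s)$-net with $t_0\ge 1$ one has $\rho_\alpha(\mathcal{D}_r)=\lfloor\beta n\rfloor-t_0+1$ for every $r\ge \lfloor\beta n\rfloor-t_0+1$, because every nonzero element of $\mathcal{D}_r$ has weight at least $r_0+1$ by the net property, while your witness has weight exactly $r_0+1$ and lies in $\mathcal{K}^s_{b,r_0+1}\subseteq\mathcal{K}^s_{b,r}$. In the example, indeed $\rho_2(\mathcal{D}_4)=\min(5,4)=4$. So the defect lies in the theorem as stated (it also invalidates the first equality of \eqref{eqequaldualvec} at $r=\lfloor\beta n\rfloor-t_0$, as the example shows), not in your strategy; the honest resolution is to prove the strict half for $\mathcal{D}_{\lfloor\beta n\rfloor-t_0+1}$ (or $\mathcal{D}_n$) and to record that at resolution $\lfloor\beta n\rfloor-t_0$ only the inequality $\rho_\alpha\ge\lfloor\beta n\rfloor-t_0+1$ survives.
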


%

Let now $P$ be a strict $(t_0,\alpha,\beta, n,m,s)$-net in base
$b$. Let $r \ge \lfloor \beta n \rfloor - t_0$. Then
$\mathcal{D}_r \supseteq \mathcal{D}_{\lfloor \beta n \rfloor -
t_0}$ and $\mathcal{D}_r \setminus \mathcal{D}_{\lfloor \beta n
\rfloor - t_0} \subseteq \mathcal{K}_{b,r}^s \setminus
\mathcal{K}_{b,\lfloor \beta n \rfloor - t_0}^s$. For any $\bsk
\in \mathcal{K}_{b,r}^s \setminus \mathcal{K}_{b,\lfloor \beta n
\rfloor - t_0}^s$ we have $\mu_\alpha(\bsk) \ge \lfloor \beta n
\rfloor - t_0 + 1$. Theorem~\ref{theorhoalpha} implies that
$\rho_\alpha(\mathcal{D}_{\lfloor \beta n \rfloor - t_0}) =
\lfloor \beta n \rfloor - t_0 + 1$ and hence
$\rho_\alpha(\mathcal{D}_r) =  \rho_\alpha(\mathcal{D}_{\lfloor
\beta n \rfloor - t_0}) = \lfloor \beta n \rfloor - t_0 + 1$. In
particular, for all $r, r' \ge \lfloor \beta n \rfloor - t_0$ we
have
\begin{equation} \label{eqequaldualvec} \rho_\alpha(\mathcal{D}_r) = \rho_{\alpha}(\mathcal{D}_{r'}) =
\rho_\alpha(\mathcal{D}_n) = \lfloor \beta n \rfloor -
t_0+1,\end{equation} since $n \ge \lfloor \beta n \rfloor - t_0$.


\section{A generalized Blokh-Zyablov construction}\label{secbzconstr}

In this section we generalise the Blokh-Zyablov construction for codes from \cite{BZ74} and for classical nets from \cite{SS09}. The proofs closely follow those in \cite{SS09}.

We call $\mathcal{N} \subseteq \mathbb{F}_q^{ns}$ an
$((s,n),\alpha,K,\delta)_q$-space if $K = |\mathcal{N}|$ and
$\delta = \delta_{\alpha,n}(\mathcal{N})$. If $\mathcal{N}$ is a
linear subspace of $\mathbb{F}_q^{ns}$ then we call $\mathcal{N}$
a linear $[s,n,\alpha,k,\delta]_q$-space if $|\mathcal{N}| = q^k$
and $\delta = \delta_{\alpha,n}(\mathcal{N})$.

Note that an $((s,n),1,K,\delta)_q$-space is a generalized
$((s,n),K,\delta)_q$-code and a linear
$[(s,n),1,k,\delta]_q$-space is a generalized linear
$[(s,n),k,\delta]_q$-code in the sense of Sch\"urer and
Schmid~\cite{SS09, mint}. Note, furthermore, that in the special
case $n=1$ an $[(s,1),k,\delta]_q$-code is an ordinary linear
$[s,k,\delta]_q$-code (see \cite{SS09} or \cite{mint} for
details).

\begin{lemma}\label{lem1b}
Let $\alpha, \alpha' \ge 1$ and let
\begin{equation*}
\{\mathbf{0}\} = \mathcal{N}^\prime_0 \subset \mathcal{N}^\prime_1
\subset \cdots \subset \mathcal{N}^\prime_r = \mathbb{F}_q^{n's'}
\end{equation*}
be a chain of linear $[s',n',\alpha',k'_u,
\delta'_{u}]_q$-spaces and let
$\mathbf{v}_1,\ldots, \mathbf{v}_{n's'}\in \mathbb{F}_q^{n's'}$
denote vectors such that $\mathcal{N}^\prime_u$ is generated by
$\mathbf{v}_1,\ldots, \mathbf{v}_{k'_u}$ for $u = 1,\ldots, r$. We call the spaces $\mathcal{N}^\prime_u$ the inner spaces.

Let $\mathcal{N}_u$ denote (not necessarily linear)
$((s,n),\alpha, K_u,\delta_u)_{q_u}$-spaces with $q_u = q^{e_u}$
and $e_u = k'_{u} - k'_{u-1}$ for $u = 1,\ldots, r$. We call the spaces $\mathcal{N}_u$  the outer
spaces.

Let $\mathcal{N}$ be given by
\begin{equation}\label{def_N}
\mathcal{N} = \left\{\sum_{u=1}^r \phi_u(\mathbf{x}_u):
\mathbf{x}_u \in \mathcal{N}_u \mbox{ for } u  = 1,\ldots, r
\right\},
\end{equation}
where $\phi_u:\mathbb{F}_{q_u}^{ns} \to \mathbb{F}_q^{nn'ss'}$
replaces each symbol (regarded as a vector of length $e_u$ over
$\mathbb{F}_q$) of an element from $\mathcal{N}_u$ by the
corresponding linear combination of the $e_u$ vectors
$\mathbf{v}_{k'_{u-1}+1}, \ldots, \mathbf{v}_{k'_u}$. The elements
in the resulting vector are grouped such that column $(a', \tau')$
(with $1 \le a' \le s'$ and $1 \le \tau' \le n'$) from
$\mathcal{N}'_u$ and column $(a, \tau)$ (with $1 \le a \le s$ and
$1 \le \tau \le n$) from $\mathcal{N}_u$ determine column $((a-1)
s' + a', (\tau-1) n' + \tau')$ in the resulting element in
$\mathcal{N}$.

Then $\mathcal{N}$ is an
\begin{equation*}
((ss',nn'),\alpha \alpha', K_1\cdots K_r, \min_{1 \le u \le r,
|\mathcal{N}_u| > 1} \delta_u \delta'_u)_q-\mathrm{space}.
\end{equation*}

\end{lemma}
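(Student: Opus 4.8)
The claim has three parts to verify for the space $\mathcal{N}$ defined in \eqref{def_N}: the cardinality $|\mathcal{N}| = K_1\cdots K_r$, the minimum distance $\delta_{\alpha\alpha',nn'}(\mathcal{N}) = \min_{u:\,|\mathcal{N}_u|>1}\delta_u\delta'_u$, and the ambient dimensions $(ss',nn')$. The plan is to analyze the structure of a nonzero element $\mathbf{X} = \sum_{u=1}^r \phi_u(\mathbf{x}_u)$ of $\mathcal{N}$ and track how the $\mu_{\alpha\alpha',nn'}$-weight of $\mathbf{X}$ is built up from the weights of the components $\mathbf{x}_u \in \mathcal{N}_u$ and the inner-space structure. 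The cardinality and dimension claims should be comparatively routine; I expect the minimum-distance computation to be the main obstacle, because it requires disentangling the interaction between the outer weight function $\mu_{\alpha,n}$ and the inner weight function $\mu_{\alpha',n'}$ and showing they multiply correctly under the composition $\mu_{\alpha\alpha',nn'}$.

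\textbf{Cardinality and injectivity.} First I would show that the map $(\mathbf{x}_1,\ldots,\mathbf{x}_r) \mapsto \sum_u \phi_u(\mathbf{x}_u)$ is a bijection from $\mathcal{N}_1 \times \cdots \times \mathcal{N}_r$ onto $\mathcal{N}$. The key point is that the $\phi_u$ map symbols into the block $\mathrm{span}(\mathbf{v}_{k'_{u-1}+1},\ldots,\mathbf{v}_{k'_u})$, and since the $\mathbf{v}_1,\ldots,\mathbf{v}_{n's'}$ are linearly independent generators arising from the strictly increasing chain $\mathcal{N}'_0 \subset \cdots \subset \mathcal{N}'_r$, the images of distinct $\phi_u$ land in complementary coordinate-combination blocks. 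Hence the sum decomposition is unique and $|\mathcal{N}| = \prod_u |\mathcal{N}_u| = K_1\cdots K_r$. The ambient space $\mathbb{F}_q^{nn'ss'}$ with the $(ss',nn')$ grouping follows directly from the column-indexing rule stated in the lemma, where outer column $(a,\tau)$ and inner column $(a',\tau')$ map to column $((a-1)s'+a',(\tau-1)n'+\tau')$.

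\textbf{Minimum distance.} For the distance I would take an arbitrary nonzero $\mathbf{X} \in \mathcal{N}$, decompose it uniquely as $\sum_u \phi_u(\mathbf{x}_u)$, and pick the largest index $u$ with $\mathbf{x}_u \neq \mathbf{0}$. Within dimension pair $(a,a')$ and coordinate block, each nonzero outer symbol of $\mathbf{x}_u$ is replaced by a nonzero element of $\mathcal{N}'_u \setminus \mathcal{N}'_{u-1}$, so it carries inner weight at least $\delta'_u$. The heart of the argument is a counting identity: the $\mu_{\alpha\alpha',nn'}$-weight of $\mathbf{X}$, computed over the refined coordinate grid of size $nn'$, should factor as (outer weight contribution) $\times$ (inner weight contribution). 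Concretely, I would verify that for the composed weight the smallest $\min(\alpha\alpha',\cdot)$ indices of $\mathbf{X}$ are obtained by taking the smallest $\min(\alpha,\nu_u)$ outer positions and, within each, the smallest $\min(\alpha',\cdot)$ inner positions, so that $\mu_{\alpha\alpha',nn'}(\mathbf{X}) \ge \delta_u\cdot\delta'_u$, with the minimum over admissible $u$ giving the claimed bound; the reverse inequality comes from exhibiting minimizing codewords in the outer and inner spaces and combining them. The delicate bookkeeping here — converting between the index $i_{j,l}$ of a nonzero coordinate in the coarse grid and the corresponding index in the refined $nn'$-grid, and checking that $\min(\alpha\alpha',\cdot)$ selects exactly the right positions — is where I expect the main difficulty to lie, and it is presumably the step that the authors import from \cite{SS09} with the necessary modifications for general $\alpha,\alpha' > 1$.
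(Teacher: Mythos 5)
Your overall architecture (unique decomposition via the $\phi_u$, a top index $u^\ast$, inner blocks lying in $\mathcal{N}'_{u^\ast}\setminus\mathcal{N}'_{u^\ast-1}$, a product lower bound on the weight) matches the paper's, but the distance part has a genuine gap: you bound the weight $\mu_{\alpha\alpha',nn'}$ of \emph{nonzero elements} of $\mathcal{N}$, whereas the quantity to be bounded is $\delta_{\alpha\alpha',nn'}(\mathcal{N})=\min\{\mu_{\alpha\alpha',nn'}(\mathbf{X}-\mathbf{Y}):\mathbf{X}\ne\mathbf{Y}\in\mathcal{N}\}$. The outer spaces $\mathcal{N}_u$ are \emph{not} assumed linear (they need not even contain $\mathbf{0}$), so $\mathcal{N}$ is in general not closed under differences, and minimum weight does not control minimum distance. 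The paper therefore starts from two distinct elements $\mathbf{X}=\sum_u\phi_u(\mathbf{x}_u)$ and $\mathbf{Y}=\sum_u\phi_u(\mathbf{y}_u)$, uses the $\mathbb{F}_q$-linearity of each $\phi_u$ to write $\mathbf{X}-\mathbf{Y}=\sum_u\phi_u(\mathbf{x}_u-\mathbf{y}_u)$, and takes $u^\ast$ to be the largest index with $\mathbf{x}_{u^\ast}\ne\mathbf{y}_{u^\ast}$; then $\mu_{\alpha,n}(\mathbf{x}_{u^\ast}-\mathbf{y}_{u^\ast})=d_{\alpha,n}(\mathbf{x}_{u^\ast},\mathbf{y}_{u^\ast})\ge\delta_{u^\ast}$ because both points lie in $\mathcal{N}_{u^\ast}$, even though their difference need not. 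This pairwise formulation is also exactly what produces the condition $|\mathcal{N}_u|>1$ in the conclusion: $\mathbf{x}_{u^\ast}\ne\mathbf{y}_{u^\ast}$ forces $|\mathcal{N}_{u^\ast}|>1$. Your criterion ``largest $u$ with $\mathbf{x}_u\ne\mathbf{0}$'' can land on an index with $|\mathcal{N}_u|=1$, say $\mathcal{N}_u=\{\mathbf{z}\}$ with $\mathbf{z}\ne\mathbf{0}$: such a $u$ contributes nothing to any difference, $\delta_u$ is not even defined, and the lemma's minimum rightly excludes it, but your argument gives no usable bound there. (Your injectivity argument via the linear independence of $\mathbf{v}_1,\ldots,\mathbf{v}_{n's'}$ is fine, and is in fact more direct than the paper's, which obtains distinctness of the sums as a by-product of the positive distance bound.)

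Two further corrections. First, drop the planned ``reverse inequality'': exact equality $\delta_{\alpha\alpha',nn'}(\mathcal{N})=\min_u\delta_u\delta'_u$ cannot be obtained by combining minimizing words and need not hold at all, because an outer nonzero coordinate at position $i$ paired with an inner nonzero coordinate at position $i'$ sits at refined position $(i-1)n'+i'$, and $(i-1)n'+i'\ge i\,i'$ is strict whenever $i>1$ and $i'<n'$; the paper proves only the lower bound, which is all that Theorems~\ref{thmbz} and~\ref{thmgenbz} use. Second, your bookkeeping is backwards: $\mu_{\alpha,n}$ sums the $\min(\alpha,\nu_j)$ \emph{largest} indices of nonzero coordinates, not the smallest, and no ``which positions get selected'' analysis is needed. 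Since all positions are positive, any collection of at most $\alpha\alpha'$ distinct nonzero positions per component already lower-bounds the $\mu_{\alpha\alpha',nn'}$-weight; one then applies $(i-1)n'+i'\ge i\,i'$ termwise, factors the resulting double sum, and bounds the factors by $\delta_{u^\ast}$ and $\delta'_{u^\ast}$. For the inner factor one also needs that the contributions of the spaces with $u<u^\ast$ lie in $\mathcal{N}'_{u^\ast-1}$, so each block attached to a nonzero symbol of $\mathbf{x}_{u^\ast}-\mathbf{y}_{u^\ast}$ stays in $\mathcal{N}'_{u^\ast}\setminus\mathcal{N}'_{u^\ast-1}$ and is in particular nonzero; here the linearity of the \emph{inner} spaces is what converts their minimum distance $\delta'_{u^\ast}$ into a minimum weight.
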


\begin{proof}
The proof follows along the same lines as the proof of
\cite[Theorem~3]{mint}. We have $\mathcal{N} \subseteq
\mathbb{F}_q^{nn'ss'}$ by the definition of the mappings $\phi_u$.
The number of elements in $\mathcal{N}$ is given by $K_1\cdots
K_r$ which follows from \eqref{def_N} and the fact that all
elements given by $\sum_{u=1}^r \phi_u(\mathbf{x}_u)$ for
$\mathbf{x}_u \in \mathcal{N}_u$ and $u  = 1,\ldots, r$ are
distinct, which we show in the following.

Let $\mathbf{x}, \mathbf{y}$ be two elements
$\mathcal{N}$ and let $\mathbf{x}_u, \mathbf{y}_u \in
\mathcal{N}_u$, $1\le u\le r$, denote the elements defining
$\mathbf{x}$ and $\mathbf{y}$ using the sum in \eqref{def_N}. If
$\mathbf{x}_u = \mathbf{y}_u$ for $1 \le u \le r$, then it follows
that $\mathbf{x} = \mathbf{y}$. Assume now that $\mathbf{x} \neq
\mathbf{y}$, then there exists a largest integer $1 \le u^\ast \le
r$ such that $\mathbf{x}_{u^\ast} \neq \mathbf{y}_{u^\ast}$. This
implies that $|\mathcal{N}_{u^\ast}| > 1$.

Since $\phi_u$ is linear, we have
\begin{equation*}
\mathbf{x}-\mathbf{y} = \sum_{u=1}^{u^\ast-1} \phi_u(\mathbf{x}_u
- \mathbf{y}_u) + \phi_{u^\ast}(\mathbf{x}_{u^\ast} -
\mathbf{y}_{u^\ast}) +\sum_{u=u^\ast+1}^r
\phi_u(\mathbf{x}_u-\mathbf{y}_u),
\end{equation*}
where the first sum is a concatenation of elements from
$\mathcal{N}'_{u^\ast-1}$ and the second sum is $\mathbf{0}$
(since $\mathbf{x}_u = \mathbf{y}_u$ and the linearity of $\phi_u$
for $u^\ast < u \le r$). Further,
$\phi_{u^\ast}(\mathbf{x}_{u^\ast} -\mathbf{y}_{u^\ast})$ is a
concatenation of elements from $\mathcal{N}'_{u^\ast} \setminus
\mathcal{N}'_{u^\ast-1}$. We denote the elements in this
concatenation by $\mathbf{w}^{j,k}$ for $1\le j\le s$ and $1\le
k\le n$.

Thus, $\mathbf{x}-\mathbf{y}$ is a concatenation of elements from
$\mathcal{N}'_{u^\ast} \setminus \mathcal{N}'_{u^\ast-1}$, since
the sum of an element from $\mathcal{N}'_{u^\ast-1}$ and an
element from $\mathcal{N}'_{u^\ast} \setminus
\mathcal{N}'_{u^\ast-1}$ is again an element of
$\mathcal{N}'_{u^\ast} \setminus \mathcal{N}'_{u^\ast-1}$.

Let
\begin{equation*}
\mu_{\alpha,n}(\mathbf{x}_{u^\ast} - \mathbf{y}_{u^\ast}) =
\sum_{j=1}^s \sum_{k=1}^{\min(\alpha, \nu_j)} i_{j,k},
\end{equation*}
where $\nu_j$ denotes the number of nonzero elements and $i_{j,k}$
are the positions of the nonzero elements in
$\mathbf{x}_{u^\ast}-\mathbf{y}_{u^\ast}$. Furthermore, let
\begin{equation*}
\mu_{\alpha',n'}(\mathbf{w}^{(j,k)}) = \sum_{j'=1}^{s'}
\sum_{k'=1}^{\min(\alpha', \nu'_{j',j,k})} i'_{j',k',j,k},
\end{equation*}
where $\nu'_{j',j,k}$ denotes the number of nonzero elements and
$i'_{j',k',j,k}$ denotes the positions of the nonzero elements in
$\mathbf{w}^{(j,k)}$. Then we have
\begin{align*}
d_{\alpha \alpha',n n'}(\mathbf{x},\mathbf{y}) & =
\mu_{\alpha\alpha', nn'}(\mathbf{x}-\mathbf{y}) \\ & \ge
\sum_{j=1}^s \sum_{j'=1}^{s'}\sum_{k=1}^{\min(\alpha,\nu_j)}
\sum_{k'=1}^{\min{(\alpha',\nu'_{j',j,k})}}
\left\{\begin{array}{ll} (i_{j,k}-1) n' +  i'_{j',k',j,k}  &
\mbox{if } \nu_j, \nu'_{j'} > 0
\\ 0 & \mbox{otherwise} \end{array}\right. \\ & \ge \sum_{j=1}^s
\sum_{j'=1}^{s'}\sum_{k=1}^{\min(\alpha,\nu_j)}
\sum_{k'=1}^{\min{(\alpha',\nu'_{j',j,k})}} i_{j,k}
i'_{j',k',j,k}\\ & = \sum_{j=1}^s  \sum_{k=1}^{\min(\alpha,\nu_j)}
i_{j,k} \sum_{j'=1}^{s'}
\sum_{k'=1}^{\min{(\alpha',\nu'_{j',j,k})}} i'_{j',k',j,k} \\ &
\ge \mu_{\alpha,n}(\mathbf{x}_{u^\ast}-\mathbf{y}_{u^\ast})
\mu_{\alpha',n'}(\mathbf{w}^{(j,k)}) \\ & \ge
\delta_{\alpha,n}(\mathcal{N}_{u^\ast})
\delta_{\alpha',n'}(\mathcal{N}'_{u^\ast}).
\end{align*}
The result thus follows.
\end{proof}

\begin{lemma}\label{lem2}
Let $\{\mathbf{0}\} = \mathcal{N}'_0 \subset \mathcal{N}_1'
\subset \cdots \subset \mathcal{N}'_r = \mathbb{F}_q^{s'}$ be as
in Lemma~\ref{lem1b} with $\alpha' = n'=1$. Let
$\mathbf{v}_1,\ldots, \mathbf{v}_{s'} \in \mathbb{F}_q^{s'}$ be
defined as in Lemma~\ref{lem1b}, where we assume without loss of
generality that the first $i-1$ elements of $\mathbf{v}_i$ are
$0$. This means that $(\mathbf{v}_1,\ldots, \mathbf{v}_{s'})^\top$
forms an $s' \times s'$ upper triangular matrix.

Moreover, let $q_u$ and $e_u$ be defined as in Lemma~\ref{lem1b}. For
positive integers $s_1 \le s_2 \le \cdots \le s_r$ let
$\mathcal{N}_u$, $u = 1,\ldots, r$, be a (not necessarily linear) $((s_u, n),
\alpha, K_u, \delta_u)_{q_u}$-space.

Then we can construct an
\begin{equation*}
((s_1 e_1+ \cdots + s_r e_r, n),\alpha, K_1\cdots K_r, \min_{1 \le
u \le r} \delta_u \delta'_u)_q-\mathrm{space}.
\end{equation*}
\end{lemma}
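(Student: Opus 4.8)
The plan is to deduce the statement from Lemma~\ref{lem1b} by a two-step reduction: first make all outer spaces equally long so that Lemma~\ref{lem1b} applies verbatim, and then delete the coordinate blocks that the upper-triangular shape of $(\mathbf{v}_1,\ldots,\mathbf{v}_{s'})^\top$ forces to vanish. First I would pad each outer space $\mathcal{N}_u$ (of length $s_u$) to length $s_r$ by appending $s_r-s_u$ zero blocks. An appended block has $\nu_j=0$ and hence contributes $0$ to $\mu_{\alpha,n}$, so this changes neither $K_u$ nor $\delta_u=\delta_{\alpha,n}(\mathcal{N}_u)$, and the padded space is again an $((s_r,n),\alpha,K_u,\delta_u)_{q_u}$-space. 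Applying Lemma~\ref{lem1b} (with $\alpha'=n'=1$, so that $\alpha\alpha'=\alpha$ and $nn'=n$) to these padded outer spaces and the given inner chain yields an $((s_rs',n),\alpha,K_1\cdots K_r,\min_{u}\delta_u\delta'_u)_q$-space $\mathcal{N}^{\mathrm{full}}\subseteq\mathbb{F}_q^{ns_rs'}$ whose $s_rs'$ coordinate blocks are indexed by pairs $(a,a')$ with $1\le a\le s_r$ and $1\le a'\le s'$ (the minimum being taken over the nontrivial levels, exactly as in Lemma~\ref{lem1b}).

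Next I would locate the identically-zero blocks. For an outer position $a$ put $u_a=\min\{u:s_u\ge a\}$, so that the levels contributing at position $a$ are precisely $u\ge u_a$. Each $\phi_u$ maps into $\spann(\mathbf{v}_{k'_{u-1}+1},\ldots,\mathbf{v}_{k'_u})$, and since $\alpha'=n'=1$ the inner coordinates coincide with the second block-index $a'$; by the upper-triangular hypothesis every vector in that span vanishes in its first $k'_{u-1}$ coordinates, so block $(a,a')$ is zero for \emph{all} codewords whenever $a'\le k'_{u_a-1}$. I would delete exactly these blocks, retaining for each $a$ the positions $a'\in\{k'_{u_a-1}+1,\ldots,s'\}$, that is $s'-k'_{u_a-1}=\sum_{v\ge u_a}e_v$ blocks. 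Summing over $a$ and telescoping gives $\sum_{u=1}^r(s_u-s_{u-1})\sum_{v\ge u}e_v=\sum_{v=1}^r s_v e_v$ surviving blocks, which is the asserted length. The cardinality stays $K_1\cdots K_r$: two distinct elements of $\mathcal{N}^{\mathrm{full}}$ must differ in some non-deleted block, so the outer-tuple $\mapsto$ codeword map remains injective after shortening.

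The point on which the argument turns is that this deletion does not lower the minimum distance. Here it is essential that $\mu_{\alpha,n}(\mathbf{A})=\sum_j\mu_{\alpha,n}(\mathbf{a}_j)$ is a sum of per-block weights, each depending only on the within-block positions of the nonzero entries and not on the block index $j$. The shortening removes entire (identically zero) blocks and never alters the contents or the internal $\tau$-indexing of a surviving block, so every codeword keeps its exact $\mu_{\alpha,n}$-value; hence the shortened space inherits $\delta_{\alpha,n}\ge\min_u\delta_u\delta'_u$ from $\mathcal{N}^{\mathrm{full}}$, and the proof is complete.

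I expect the only genuine work to lie in pinning down this invariance cleanly. One must verify that the upper-triangular hypothesis forces each deleted block to be zero for \emph{every} codeword rather than merely for some, so that the operation is a true shortening and not a lossy puncturing; by contrast, the within-block indices, and with them the NRT-type weighting \emph{inside} a block, are left completely untouched, which is exactly why no re-indexing penalty appears. This is also the structural reason the lemma restricts to $\alpha'=n'=1$: with size-$1$ inner blocks the inner weight is Hamming, so "vanishing in the first $k'_{u-1}$ inner coordinates" translates directly into "whole output blocks $(a,a')$ with $a'\le k'_{u_a-1}$ are deletable," and the bookkeeping above goes through without having to split blocks.
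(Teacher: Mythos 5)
Your proposal is correct and follows essentially the same three-step construction as the paper's proof: pad each outer space to the common length $s_r$, apply Lemma~\ref{lem1b} with $\alpha'=n'=1$, and then delete the identically-zero blocks forced by the upper-triangular structure of $(\mathbf{v}_1,\ldots,\mathbf{v}_{s'})^\top$, noting that this shortening changes neither the cardinality nor any $\mu_{\alpha,n}$-weight. The only (immaterial) difference is that the paper prepends rather than appends the zero padding, so the deleted block indices are mirrored; your bookkeeping and the count $\sum_{u} s_u e_u$ agree with the paper's, just written out in more detail.
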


\begin{proof}
The construction is analogous to the construction in the proof of
\cite[Theorem~4]{mint} and follows in three steps.
\begin{itemize}
\item Let $s = s_r$ and construct new spaces $\mathcal{M}_u$ by
embedding each space $\mathcal{N}_u$ in $\mathbb{F}_{q_u}^{(s,n)}$. This is achieved
by prepending $\mathbf{0} \in \mathbb{F}_{q_u}^{(s-s_u,n)}$ to
each element of $\mathcal{N}_u$. 

\item Use Lemma~\ref{lem1b} with $\mathcal{N}'_0,\ldots, \mathcal{N}'_r$ as
inner spaces and $\mathcal{M}_1,\ldots, \mathcal{M}_r$ as outer
spaces. This yields a new space $\mathcal{M}$.

\item Finally, for $i = 1, \ldots, s'$, choose $u$ minimal such that the condition
$e_1 +\cdots + e_u \ge i$ holds and construct a space $\mathcal{N}$ from
$\mathcal{M}$. This can be done by deleting all columns $(k,\tau)$ in $\mathcal{M}$
with $k = 0 s' + i, 1 s'+i, \ldots, (s-s_u-1) s' + i$.

The total number of deleted blocks is $e_1 (s-s_1) + \cdots + e_r
(s-s_r)$, so the length of $\mathcal{N}$ is $s_1 e_1 + \cdots +
s_r e_r$. The deleted positions are $\mathbf{0} \in
\mathbb{F}_q^{n}$ for each $\phi_u(\mathbf{x}_u)$, either due to a
$0$ in $\mathbf{v}_i$ or due to a $\mathbf{0}$ appended to
$\mathbf{x}_u \in \mathcal{N}_u$. Note that this procedure neither influences the dimension nor
the weight of $\mathcal{M}$.
\end{itemize}
\end{proof}

\section{A Blokh-Zyablov propagation rule}\label{sec-blokh}

\subsection{A Blokh-Zyablov propagation rule for digital higher order nets}

Lemma~\ref{lem2} can be applied to digital higher order nets which
yields a new propagation rule. This propagation rule generalises
the Matrix-product construction in \cite{DK08}. As there were
Propagation Rules I--XIV in \cite{DK08}, we call our new rule
Propagation Rule XV.

\begin{theorem}[Propagation Rule XV]\label{thmbz}
Let $\mathbf{v}_1,\ldots, \mathbf{v}_{s'}$ and $\mathcal{N}'_u$,
$q_u$ and $e_u$ for $u=1,\ldots, r$ be defined as in
Lemma~\ref{lem2}. Let $\alpha \ge 1$. For positive integers $s_1
\le s_2 \le \cdots \le s_r$ let $\mathcal{P}_u$, $1\le u\le r$,
denote digital $(t_u, \alpha,\beta_u, n_u \times m_u, s_u)$-nets
over $\mathbb{F}_{q_u}$.

Then a digital $(t, \alpha, \beta, n \times m, s)$-net over
$\mathbb{F}_q$ can be constructed, where
\begin{align*}
s & = \sum_{u=1}^r e_u s_u, \\
m & = \sum_{u=1}^r e_u m_u, \\
n & = \sum_{u=1}^r e_u n_u, \\
\beta & = \min(1, \alpha m/n), \\
t & \le \beta n + 1 - \min_{1 \le u \le r} (\beta_u n_u-t_u+1)
\delta'_u.
\end{align*}
\end{theorem}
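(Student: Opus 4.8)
The plan is to connect the digital net construction to the general duality framework via Proposition~\ref{thm1}, apply Lemma~\ref{lem2} to the dual spaces, and then translate the resulting parameters of the combined dual space back into net parameters. First I would recall that by Proposition~\ref{thm1}, the digital $(t_u,\alpha,\beta_u,n_u\times m_u,s_u)$-net $\mathcal{P}_u$ over $\mathbb{F}_{q_u}$ corresponds to its dual space $\mathcal{N}_u\subseteq\mathbb{F}_{q_u}^{n_u s_u}$, which satisfies $\delta_{\alpha,n_u}(\mathcal{N}_u)=\beta_u n_u-t_u+1$ (assuming strictness, or $\ge$ in general). Thus each $\mathcal{N}_u$ is an $((s_u,n_u),\alpha,K_u,\delta_u)_{q_u}$-space with $\delta_u=\beta_u n_u-t_u+1$; here $K_u=q_u^{n_u s_u-m_u}$ since the dual of an $m_u$-dimensional row space in $\mathbb{F}_{q_u}^{n_u s_u}$ has dimension $n_u s_u-m_u$. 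The subtlety I must address is that the $n_u$ may differ across $u$, whereas Lemma~\ref{lem2} is stated with a common $n$; I would handle this by taking $n=\sum_{u=1}^r e_u n_u$ as the target and padding each $\mathcal{N}_u$ appropriately (prepending zero blocks in the digit direction), checking that such padding does not decrease the $\delta_{\alpha,n_u}$-weight, exactly as the column-embedding in Lemma~\ref{lem2} preserves weight.

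Next I would invoke Lemma~\ref{lem2} directly with these outer spaces $\mathcal{N}_u$ and the chain of inner spaces $\mathcal{N}'_u$ (with $\alpha'=n'=1$), producing an $((s_1 e_1+\cdots+s_r e_r,n),\alpha,K_1\cdots K_r,\min_{1\le u\le r}\delta_u\delta'_u)_q$-space $\mathcal{N}$. This gives $s=\sum_{u=1}^r e_u s_u$ and size $|\mathcal{N}|=\prod_u K_u=q^{\sum_u (n_u s_u-m_u)e_u}$, which should match $q^{ns-m}$ with $m=\sum_u e_u m_u$ and $n=\sum_u e_u n_u$; I would verify this dimension bookkeeping. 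The key inequality I then need is $\delta_{\alpha,n}(\mathcal{N})\ge\min_{1\le u\le r}\delta_u\delta'_u=\min_u(\beta_u n_u-t_u+1)\delta'_u$, which Lemma~\ref{lem2} supplies (with $\alpha\alpha'=\alpha$ since $\alpha'=1$).

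Finally I would reverse the duality correspondence: by Proposition~\ref{thm1}, a space $\mathcal{N}$ with $\delta_{\alpha,n}(\mathcal{N})\ge\beta n-t+1$ is the dual of a digital $(t,\alpha,\beta,n\times m,s)$-net over $\mathbb{F}_q$. Setting $\beta=\min(1,\alpha m/n)$ and solving $\beta n-t+1\le\delta_{\alpha,n}(\mathcal{N})$ for $t$ yields the bound $t\le\beta n+1-\min_{1\le u\le r}(\beta_u n_u-t_u+1)\delta'_u$, as claimed. I would also note that the linear structure of $\mathcal{N}$ (needed for it to be a genuine dual space of a digital net, i.e.\ to have a generating matrix) follows because each $\mathcal{N}_u$ is itself the dual space of a digital net and hence linear over $\mathbb{F}_{q_u}$, the maps $\phi_u$ are $\mathbb{F}_q$-linear, and $\mathbb{F}_{q_u}=\mathbb{F}_{q^{e_u}}$ is an $e_u$-dimensional $\mathbb{F}_q$-vector space, so $\mathcal{N}$ is an $\mathbb{F}_q$-linear subspace of $\mathbb{F}_q^{ns}$.

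The main obstacle I anticipate is the padding step reconciling the differing $n_u$ with the uniform digit-length $n$ required to view everything inside a single $\mathbb{F}_q^{ns}$ and to apply the $\mu_{\alpha,n}$ weight consistently: I must confirm that embedding $\mathcal{N}_u\subseteq\mathbb{F}_{q_u}^{n_u s_u}$ into the larger digit space does not create low-weight vectors and that the NRT-type weight $\mu_{\alpha,\cdot}$ behaves correctly under the block regrouping of Lemma~\ref{lem2}. The remaining parameter identities and the final solving-for-$t$ step are routine once the weight inequality and the dimension count are established.
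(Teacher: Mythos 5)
Your overall strategy---dualize each $\mathcal{P}_u$ via Proposition~\ref{thm1}, combine the dual spaces with Lemma~\ref{lem2}, and dualize back---is exactly the paper's route, and your remarks on linearity of $\mathcal{N}$ and on solving for $t$ are fine. But the step you yourself single out as the main obstacle, reconciling the differing $n_u$, is resolved incorrectly, and the error is fatal to the parameter claims. If you pad each dual space $\mathcal{N}_u$ with \emph{zero} digits, its cardinality remains $K_u = q_u^{n_u s_u - m_u}$, so Lemma~\ref{lem2} yields $|\mathcal{N}| = q^{\sum_u e_u (n_u s_u - m_u)}$. This does \emph{not} equal $q^{ns-m}$: with $n = \sum_u e_u n_u$, $s = \sum_u e_u s_u$, $m = \sum_u e_u m_u$ one has
\begin{equation*}
ns - m - \sum_{u=1}^r e_u\left(n_u s_u - m_u\right) \;=\; \Bigl(\sum_{u=1}^r e_u n_u\Bigr)\Bigl(\sum_{u=1}^r e_u s_u\Bigr) - \sum_{u=1}^r e_u n_u s_u \;>\; 0
\end{equation*}
whenever $r \ge 2$ or some $e_u \ge 2$ (the cross terms $e_u e_v n_u s_v$ survive). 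So the space you build is strictly too small: the null space of any $m \times ns$ generating matrix has dimension at least $ns - m > \dim_{\mathbb{F}_q} \mathcal{N}$, hence your $\mathcal{N}$ can never be realized as the dual space of a digital net with only $q^m$ points, and the final application of Proposition~\ref{thm1} is blocked. (It would at best give a net with $q^{m^\ast}$ points for some $m^\ast > m$, a much weaker statement.) Your weight check is not where the problem lies---zero padding indeed does not decrease $\mu_{\alpha,\cdot}$---it is the cardinality count, which you deferred, that fails.

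The correct padding goes the other way: pad the \emph{nets}, not the dual vectors. Append $n - n_u$ zero rows to each generating matrix of $\mathcal{P}_u$; on the dual side this means the new deep digit positions $n_u+1,\ldots,n$ of each block are left \emph{free} rather than forced to zero, i.e.\ one replaces $\mathcal{N}_u$ by $\mathcal{N}_u^{\mathrm{pad}} = \{\mathbf{x} \in \mathbb{F}_{q_u}^{n s_u} : \mbox{the restriction of } \mathbf{x} \mbox{ to digits } 1,\ldots,n_u \mbox{ of each block lies in } \mathcal{N}_u\}$. Then $\dim \mathcal{N}_u^{\mathrm{pad}} = n s_u - m_u$ and the bookkeeping closes exactly, $\sum_u e_u (n s_u - m_u) = ns - m$. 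The minimum distance survives this enlargement: since $\mu_{\alpha,n}$ sums the \emph{largest} indices of nonzero digits, adjoining nonzero digits at depths $> n_u$ cannot decrease the weight of a vector whose restriction is a nonzero element of $\mathcal{N}_u$, while a vector supported only on the new depths has weight at least $n_u + 1 \ge \beta_u n_u - t_u + 1 = \delta_u$; hence $\delta_{\alpha,n}(\mathcal{N}_u^{\mathrm{pad}}) \ge \delta_u$ still, and Lemma~\ref{lem2} followed by Proposition~\ref{thm1} gives the theorem as stated. To be fair, the paper's one-line proof is silent on this point as well (and even records the dimension of $\mathcal{N}_u$ incorrectly as $m_u$), but the free-coordinate padding is what its construction requires; your zero padding, by contrast, provably cannot produce the claimed parameters.
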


\begin{proof}
Let $\mathcal{N}_u$ denote the dual space of $\mathcal{P}_u$ for
$u=1,\ldots, r$. Then $\mathcal{N}_u$ is a linear $[(s,n),\alpha,
m_u, \beta_u n_u - t_u + 1]_{q_u}$-space. The result then follows
by applying Lemma~\ref{lem2} using $\mathcal{N}'_0,\ldots,
\mathcal{N}'_r$ as inner spaces and $\mathcal{N}_1,\ldots,
\mathcal{N}_r$ as outer spaces to obtain a space $\mathcal{N}$ and
applying Proposition~\ref{thm1}.
\end{proof}

\subsection{A Blokh-Zyablov propagation rule for higher order nets}
In this subsection we consider higher order nets which are not
necessarily digital. Again we have the same theorem as in the
previous case by considering the dual set instead of the dual net.
Applying Lemma~\ref{lem2} and Theorem~\ref{theorhoalpha} yields
the following result, which is Propagation Rule 17 for general
higher order nets (as there were Propagation Rules 1--16 for
general higher order nets in \cite{BDP11}).

\begin{theorem}[Propagation Rule 17]\label{thmgenbz}
Let $\mathbf{v}_1,\ldots, \mathbf{v}_{s'}$ and $\mathcal{N}'_u$,
$q_u$ and $e_u$ for $u=1,\ldots, r$ be defined as in
Lemma~\ref{lem2}. Let $\alpha \ge 1$. For positive integers $s_1
\le s_2 \le \cdots \le s_r$ let $\mathcal{P}_u$ denote $(t_u,
\beta_u, \alpha, n_u \times m_u, s_u)$-nets in base $b_u =
b^{e_u}$ for $1 \le u \le r$.

Then a $(t, \alpha, \beta, n \times m, s)$-net in base $b$ can be
constructed, where
\begin{align*}
s & = \sum_{u=1}^r e_u s_u, \\
m & = \sum_{u=1}^r e_u m_u, \\
n & = \sum_{u=1}^r e_u n_u, \\
\beta & = \min(1, \alpha m/n), \\
t & \le \beta n + 1 - \min_{1 \le u \le r} (\beta_u n_u-t_u+1)
\delta'_u.
\end{align*}
\end{theorem}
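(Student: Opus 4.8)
The plan is to mirror the digital case (Theorem~\ref{thmbz}) but work with dual \emph{sets} and the function $\rho_\alpha$ instead of dual \emph{spaces} and $\delta_{\alpha,n}$, using Theorem~\ref{theorhoalpha} as the bridge between net quality and the combinatorial weight of the dual set. First I would associate to each outer net $\mathcal{P}_u$ its dual set rather than a dual space: by Theorem~\ref{theorhoalpha}, since $\mathcal{P}_u$ is a (strict) $(t_u,\alpha,\beta_u,n_u\times m_u,s_u)$-net in base $b_u=b^{e_u}$, its dual set $\mathcal{D}_{n_u}(\mathcal{P}_u)$ satisfies $\rho_\alpha(\mathcal{D}_{n_u}(\mathcal{P}_u)) = \lfloor\beta_u n_u\rfloor - t_u + 1 = \beta_u n_u - t_u + 1$ (the floor being superfluous when $\beta_u n_u$ is an integer, which we may assume as in Proposition~\ref{thm1}). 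The key conceptual point, established in the final paragraphs of Section~\ref{secdual}, is that over $\mathbb{F}_{b^{e_u}}$ the dual set carries exactly the information that the dual space carries in the digital case, and that $\rho_\alpha$ on the dual set plays the role that $\delta_{\alpha,n}$ plays for the dual space. Thus each $\mathcal{D}_{n_u}(\mathcal{P}_u)$ can be treated as an $((s_u,n_u),\alpha,K_u,\delta_u)_{q_u}$-space in the sense required by Lemma~\ref{lem2}, with $q_u=b^{e_u}$ and $\delta_u = \beta_u n_u - t_u + 1$.

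Next I would feed these $r$ objects into Lemma~\ref{lem2} as the outer spaces, with the chain $\{\mathbf{0}\}=\mathcal{N}'_0\subset\cdots\subset\mathcal{N}'_r=\mathbb{F}_q^{s'}$ (having $\alpha'=n'=1$) as the inner spaces and $\mathbf{v}_1,\ldots,\mathbf{v}_{s'}$ the associated upper-triangular generating vectors. Lemma~\ref{lem2} then produces a single combined object $\mathcal{N}$ that is an $((s_1e_1+\cdots+s_re_r,\,n),\alpha,K_1\cdots K_r,\min_{1\le u\le r}\delta_u\delta'_u)_q$-space, where the combined dimension parameter is $n=\sum_{u=1}^r e_u n_u$ after accounting for the block structure. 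Combining the individual weights gives the combined minimum distance $\min_{1\le u\le r}(\beta_u n_u - t_u + 1)\delta'_u$, exactly the quantity appearing in the claimed bound on $t$. The parameters $s=\sum e_u s_u$, $m=\sum e_u m_u$, $n=\sum e_u n_u$, and $\beta=\min(1,\alpha m/n)$ follow by the same bookkeeping as in Theorem~\ref{thmbz}.

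Finally I would invoke Theorem~\ref{theorhoalpha} in the reverse direction to reinterpret the resulting space $\mathcal{N}$ as (the dual set of) a higher order net. Since $\mathcal{N}$ has minimum $\rho_\alpha$-weight at least $\min_u(\beta_u n_u - t_u + 1)\delta'_u$, the net reconstructed from it (using Theorem~\ref{thm_dual}, which is legitimate provided the resolution is at least the strength, i.e.\ $n\ge\lfloor\beta n\rfloor - t$ as noted after that theorem) is a $(t,\alpha,\beta,n\times m,s)$-net in base $b$ whose quality parameter satisfies $\beta n - t + 1 \ge \min_u(\beta_u n_u - t_u + 1)\delta'_u$, i.e.\ the stated inequality on $t$. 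I expect the main obstacle to be \emph{verifying that the dual set genuinely satisfies the structural hypotheses of Lemma~\ref{lem2}}: Lemma~\ref{lem2} was phrased for $((s,n),\alpha,K,\delta)_q$-spaces where $\delta=\delta_{\alpha,n}$, so one must check that the weight $\rho_\alpha$ on the dual set transforms under the concatenation map $\phi_u$ in precisely the way the $\mu_{\alpha,n}$ computation in the proof of Lemma~\ref{lem1b} requires, and that the non-digital base-$b^{e_u}$ arithmetic underlying the dual set is compatible with the column-regrouping of the $\phi_u$. Making this identification airtight, rather than the parameter arithmetic, is the crux; once it is in place the proof is a one-line application of Lemma~\ref{lem2} and Theorem~\ref{theorhoalpha}, exactly parallel to Theorem~\ref{thmbz}.
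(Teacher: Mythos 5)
Your proposal takes essentially the same approach as the paper: the paper's entire proof of Propagation Rule 17 is the one-sentence remark preceding the theorem that the digital argument carries over ``by considering the dual set instead of the dual space,'' applying Lemma~\ref{lem2} and Theorem~\ref{theorhoalpha} --- exactly the reduction you spell out, in parallel with Theorem~\ref{thmbz}. The compatibility issues you flag as the crux (that the dual set of a non-digital net is merely a set, so Lemma~\ref{lem2}'s minimum-distance hypothesis must be reread as a minimum-weight statement via $\rho_\alpha$, and that one needs an actual dual vector, not just a dual set, to reconstruct a point set) are left equally unaddressed by the paper, so your account is, if anything, more explicit than the paper's own.
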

\begin{remark}\label{remsubcode}
Note that in Lemma \ref{lem1b}, Lemma \ref{lem2}, Theorem \ref{thmbz} and Theorem
\ref{thmgenbz} it is sufficient to require $\mathcal{N}'_r
\subseteq \mathbb{F}_q^{s'}$ instead of $\mathcal{N}'_r =
\mathbb{F}_q^{s'}$. Indeed, this slight modification is possible as one can use a trivial space 
$\mathcal{N}_u=\{\bszero\}$ as an outer space in Lemma \ref{lem1b}, which does not influence the minimal distance of the newly obtained space (cf. Remark 4 in \cite{SS09}).
\end{remark}

\section{Numerical results}\label{secnum}

Theorems \ref{thmbz} and \ref{thmgenbz} provide previously unknown
propagation rules for higher order nets. To show how powerful these are, 
we state exemplary numerical results for the
digital case, but it should be noted that the results are also true for the more general case,
as the propagation rules yield the same parameters 
and we always start from existing digital nets, which are by definition
a subclass of general higher order nets. In Tables \ref{table1}--\ref{table3}, we present, for selected
values of $m$ and $s$, the results obtained when we use different
propagation rules for the case $q=5$, $\alpha=2$, and $\beta=1$.
As we restrict ourselves to considering
only digital nets, we only refer to propagation rules
with roman numbers within this section.

Table \ref{table1} covers the case where $s=5$,
Table \ref{table2} the case $s=15$ and Table
\ref{table3} the case $s=25$. In all tables we consider $m$
between 15 and 30. Since $\alpha$ and $\beta$ are fixed, and
different propagation rules might yield different ratios of $n$
and $m$, it is most useful to compare the strengths of the nets
obtained. As outlined above, the strength of a digital net refers
to the value of $\sigma=\sigma(\beta,n,t)=\beta n -t$. Note that,
in some of our new propagation rules, one can make many different
choices of smaller nets that might yield a bigger
net with the same parameters. We only give the best
values of the strength $\sigma$ we can obtain by going through a number of
possible choices of the smaller nets involved.

Our tables below can be seen as an extension of Table 3 in
\cite[Section 4]{DK08}. To be more precise, 
we consider the following quantities in Tables
\ref{table1} and \ref{table2}.
\begin{itemize}
 \item $\sigma_{\mathrm{dir}}$: The strength of a digital $(t,2,1,2m \times m,s)$-net over $\Field_5$ using
 the generating matrices of an existing classical digital $(t',m,2s)$-net over $\Field_5$, where we then obtain (cf.~\cite{D08})
\begin{equation}\label{eqtbounddirect}
t\le 2\min\left\{m,t' + \left\lfloor\frac{s}{2}\right\rfloor\right\}.
\end{equation}
We call this construction method the direct construction method.

\item $\sigma_{\mathrm{VII}}$: The strength of a digital net
constructed from a digital $(t_1,2,1,2 m_1\times m_1,s_1)$-net
$P_1$ and a digital $(t_2,2,1,2m_2\times m_2,s_2)$-net $P_2$ over
$\Field_5$ using Propagation Rule VII in \cite{DK08}, where $P_1$
and $P_2$ are obtained by the direct construction method from
classical nets. Here, $n=2m$.

\item $\sigma_{\mathrm{VIII}}$: The strength of a digital net
constructed from a digital $(t_1,2,1,2m_1\times m_1,s_1)$-net
$P_1$ and a digital $(t_2,2,1,2m_2\times m_2,s_2)$-net $P_2$
($s_1\le s_2$) over $\Field_5$ using Propagation Rule VIII in
\cite{DK08}, where $P_1$ and $P_2$ are obtained by the direct
construction method. Here, $n=2m$.

\item $\sigma_{\mathrm{IX}}$: The strength of a digital net
constructed from a digital $(t_1,2,1,2m_1\times m_1,s_1)$-net
$P_1$, a digital $(t_2,2,1,2m_2\times m_2,s_2)$-net $P_2$, and a
digital $(t_3,2,1,2m_3\times m_3,s_3)$-net $P_3$ ($s_1\le s_2\le
s_3$) over $\Field_5$ using Propagation Rule IX in \cite{DK08},
where $P_1$, $P_2$, and $P_3$ are obtained by the direct
construction method. Again, $n=2m$.

\item $\sigma_{\mathrm{XI}}$: The strength of a digital net
obtained by using Propagation Rule XI from \cite{DK08}  with
$r=2$, where the higher order nets plugged into Rule XI are
obtained by the direct construction method from classical nets.
Note that this rule can, since $r=2$, only be applied for the
cases where $m$ is even, and we then have $n=m$. As Propagation Rule XI with $r=2$
only yields higher order nets with an even value of $s$, we obtain
the values in the Tables \ref{table1}--\ref{table3} by
projection from 6-dimensional, 16-dimensional, and 26-dimensional nets,
respectively (this is allowed due to Propagation Rule V in
\cite{DK08}).

\item $\sigma_{\mathrm{XI}+\mathrm{VIII}}$: The strength of a
digital net obtained by first applying the direct construction
method to classical nets, then applying Propagation Rule XI with
$r=2$ in \cite{DK08}, and then using pairs of the newly obtained
nets to apply Propagation Rule VIII in \cite{DK08}. Note again
that we need to restrict ourselves to even cases of $m$ here, as
we first apply Rule XI with $r=2$. For
$\sigma_{\mathrm{XI}+\mathrm{VIII}}$, we again have $n=m$.

\item $\sigma_{\mathrm{XV}}$: The strength of a digital net by
first applying the direct construction method to classical nets,
and then Propagation Rule XV with $r=2$. (Theorem \ref{thmbz}). As
the inner spaces (see the proof of Theorem \ref{thmbz}, and Lemma
\ref{lem2}) we use (extended) Reed Solomon Codes over
$\mathbb{F}_5$. These codes are linear $[5,j,6-j]$-codes over
$\mathbb{F}_5$ (see \cite{mint}). In view of Remark
\ref{remsubcode}, we can use Reed Solomon Codes with parameters
$[5,0,6]_5$, $[5,2,4]_5$, and $[5,4,2]_5$ as the inner codes, hence $r$
indeed equals 2 and $e_1=e_2=2$ in Theorem \ref{thmbz}, which is
why we again need to restrict ourselves to even cases of $m$ here (in general one can of course also obtain odd values for $m$ by different choices of $e_i$ and $m_i$).
In the language of Theorem \ref{thmbz}, we have $\delta_1=4$ and
$\delta_2=2$ in this case. For $\sigma_{\mathrm{XV}}$ we have
$n=2m$, and, as for $\sigma_{\mathrm{XI}}$, we obtain the values
in the Tables \ref{table1}--\ref{table3} by projection from 6-,
16-, and 26-dimensional nets, respectively (again by
Propagation Rule V in \cite{DK08}).
\end{itemize}
The results in Tables \ref{table1}--\ref{table3} show that our new Propagation Rule XV is superior to the other 
propagation rules tested for dimensions 15 and 25, and that also the combination of other rules, such as that of Rules XI and VIII, does not yield better results.
In dimension 5, though still competitive, Propagation Rule XV is outperformed by the direct construction method. It is likely that the direct construction performs better
for lower dimensions as the $t$-values of classical digital nets are very small in this case. Furthermore, the bound \eqref{eqtbounddirect} depends critically on the value of $s$, so
it is natural that lower dimensional results for the direct construction method are stronger. The fact that the direct construction method works well for low dimensions
is in line with numerical results in \cite{DKPS07}, where a similar phenomenon was observed.

\begin{table}
\begin{center}

{\footnotesize
\begin{tabular}[b]{|c||c|c|c|c|c|c|c|}\hline
               $m$ &$\sigma_{\mathrm{dir}}$&$\sigma_{\mathrm{VII}}$& $\sigma_{\mathrm{VIII}}$&$\sigma_{\mathrm{IX}}$
               &$\sigma_{\mathrm{XI}}$&$\sigma_{\mathrm{XI}+\mathrm{VIII}}$&$\sigma_{\mathrm{XV}}$ \\
                \hline\hline
              $15$ &  24 &  12 &  17 &  14 &   &   &   \\ \hline
              $16$ &  26 &  14 &  18 &  16 & 14&  9& 19\\ \hline
              $17$ &  28 &  14 &  20 &  17 &   &   &   \\ \hline
              $18$ &  30 &  16 &  21 &  18 & 16& 10& 21\\ \hline
              $19$ &  32 &  16 &  22 &  20 &   &   &   \\ \hline
              $20$ &  34 &  18 &  24 &  20 & 18& 12& 25\\ \hline
              $21$ &  36 &  18 &  25 &  21 &   &   &   \\ \hline
              $22$ &  38 &  20 &  26 &  22 & 20& 13& 27\\ \hline
              $23$ &  40 &  20 &  28 &  24 &   &   &   \\ \hline
              $24$ &  42 &  22 &  29 &  25 & 22& 14& 29\\ \hline
              $25$ &  44 &  22 &  30 &  26 &   &   &   \\ \hline
              $26$ &  46 &  24 &  32 &  26 & 24& 16& 33\\ \hline
              $27$ &  48 &  24 &  33 &  28 &   &   &   \\ \hline
              $28$ &  50 &  26 &  34 &  29 & 26& 17& 35\\ \hline
              $29$ &  52 &  26 &  36 &  30 &   &   &   \\ \hline
              $30$ &  54 &  28 &  37 &  32 & 28& 18& 37\\ \hline
\end{tabular}}
\end{center}
\caption{$\sigma$-values depending on $m$ ($15\le m\le 30$) for
$\alpha=2$, $\beta=1$, $q=5$, and $s=5$.}\label{table1}
\end{table}

\begin{table}
\begin{center}

{\footnotesize
\begin{tabular}[b]{|c||c|c|c|c|c|c|c|}\hline
               $m$ &$\sigma_{\mathrm{dir}}$&$\sigma_{\mathrm{VII}}$& $\sigma_{\mathrm{VIII}}$&$\sigma_{\mathrm{IX}}$&$\sigma_{\mathrm{XI}}$
               &$\sigma_{\mathrm{XI}+\mathrm{VIII}}$&$\sigma_{\mathrm{XV}}$\\ \hline\hline
               $15$ &  0 &  4 &  5 &  6 &  &   &   \\ \hline
              $16$ &  2 &  4 &  5 &  8 &  8&  6& 13\\ \hline
              $17$ &  2 &  4 &  6 &  8 &   &   &   \\ \hline
              $18$ &  4 &  6 &  8 &  9 & 10&  8& 17\\ \hline
              $19$ &  4 &  6 &  9 & 10 &   &   &   \\ \hline
              $20$ &  6 &  6 & 10 & 12 & 12&  9& 19\\ \hline
              $21$ &  8 &  8 & 12 & 13 &   &   &   \\ \hline
              $22$ &  8 &  8 & 13 & 14 & 14&  9& 21\\ \hline
              $23$ & 10 & 10 & 13 & 14 &   &   &   \\ \hline
              $24$ & 12 & 12 & 14 & 16 & 16& 12& 25\\ \hline
              $25$ & 14 & 14 & 16 & 17 &   &   &   \\ \hline
              $26$ & 16 & 16 & 17 & 18 & 18& 13& 27\\ \hline
              $27$ & 18 & 18 & 18 & 20 &   &   &   \\ \hline
              $28$ & 20 & 20 & 20 & 20 & 20& 14& 29\\ \hline
              $29$ & 22 & 22 & 22 & 22 &   &   &   \\ \hline
              $30$ & 24 & 24 & 24 & 24 & 22& 16& 33\\ \hline
\end{tabular}}
\end{center}
\caption{$\sigma$-values depending on $m$ ($15\le m\le 30$) for
$\alpha=2$, $\beta=1$, $q=5$ and $s=15$.}\label{table2}
\end{table}

\begin{table}
\begin{center}

{\footnotesize
\begin{tabular}[b]{|c||c|c|c|c|c|c|c|}\hline
               $m$ &$\sigma_{\mathrm{dir}}$&$\sigma_{\mathrm{VII}}$& $\sigma_{\mathrm{VIII}}$&$\sigma_{\mathrm{IX}}$
               &$\sigma_{\mathrm{XI}}$&$\sigma_{\mathrm{XI}+\mathrm{VIII}}$&$\sigma_{\mathrm{XV}}$ \\
                \hline\hline
              $15$ &  0 &  0 &  1 &  2 &   &   &   \\ \hline
              $16$ &  0 &  0 &  1 &  2 &  4&  2&  5\\ \hline
              $17$ &  0 &  0 &  1 &  2 &   &   &   \\ \hline
              $18$ &  0 &  0 &  1 &  2 &  6&  4&  9\\ \hline
              $19$ &  0 &  0 &  1 &  2 &   &   &   \\ \hline
              $20$ &  0 &  0 &  1 &  2 &  8&  5& 11\\ \hline
              $21$ &  0 &  0 &  1 &  4 &   &   &   \\ \hline
              $22$ &  0 &  0 &  1 &  4 & 10&  6& 13\\ \hline
              $23$ &  0 &  2 &  2 &  5 &   &   &   \\ \hline
              $24$ &  0 &  2 &  2 &  5 & 12&  8& 17\\ \hline
              $25$ &  0 &  2 &  4 &  6 &   &   &   \\ \hline
              $26$ &  0 &  2 &  4 &  6 & 14&  9& 19\\ \hline
              $27$ &  2 &  4 &  5 &  8 &   &   &   \\ \hline
              $28$ &  2 &  4 &  5 &  8 & 16& 10& 21\\ \hline
              $29$ &  4 &  4 &  5 &  9 &   &   &   \\ \hline
              $30$ &  6 &  6 &  6 & 10 & 18& 12& 25\\ \hline
\end{tabular}}
\end{center}
\caption{$\sigma$-values depending on $m$ ($15\le m\le 30$) for
$\alpha=2$, $\beta=1$, $q=5$, and $s=25$.}\label{table3}
\end{table}

We emphasise that our examples are just illustrations and can by
no means systematically cover all cases one might theoretically
consider; to be more precise, we have the following restrictions
in Tables \ref{table1}--\ref{table3}.
\begin{itemize}
 \item We only show particular choices of the parameters involved. We restrict ourselves to some illustrative cases.

\item We do not consider all possible combinations of different
propagation rules, as this would lead to a too high number of
parameters.

\item Not all propagation rules are applicable for all sets of
parameters. This is indicated by void cells in the tables in cases
where a certain propagation rule was not applicable.
\end{itemize}

\section{Acknowledgements}

The authors would like to thank G. Pirsic, R. Sch\"{u}rer, and A.
Winterhof for valuable comments. Furthermore, P. Kritzer would like to thank 
J. Dick, F.Y. Kuo and I.H. Sloan for their hospitality during his visits to the University of
New South Wales in February 2011 and February 2012, during which parts of this paper were written.

\begin{small}
\noindent\textbf{Authors' addresses:} \ 
\\ \\
\noindent Josef Dick\\
School of Mathematics and Statistics, University of New South Wales\\ 
Sydney, NSW, 2052, Australia\\
\texttt{josef.dick@unsw.edu.au}\\ \\
\noindent Peter Kritzer\\ 
Institut f\"{u}r Finanzmathematik, Universit\"{a}t Linz\\ 
Altenbergerstr.~69, 4040 Linz, Austria\\
\texttt{peter.kritzer@jku.at}
\end{small}

\end{document}